\tikzset{every state/.style={minimum size=0pt}}
\newtheorem{theorem}{Theorem}
\newtheorem{lemma}[theorem]{Lemma}
\newtheorem{openquestion}[theorem]{Open Question}
\newtheorem{observation}[theorem]{Observation}
\newtheorem{proposition}[theorem]{Proposition}
\newtheorem{example}[theorem]{Example}
\newtheorem{definition}[theorem]{Definition}
\newtheorem{claim}[theorem]{Claim}
\newcommand{\rogers}[1]{{\color{black} #1}}
\newcommand{\review}[1]{{\color{black} #1}}
\begin{document}
\title{\textbf{Extremal Results on Conflict-free Coloring\thanks{We note that one of the results in this submission, Theorem \ref{thm:max_degree}, had already appeared as part of \cite{bhyravarapu2022conflict} in the conference 47th International Symposium on Mathematical Foundations of Computer Science (MFCS 2022). The article \cite{bhyravarapu2022conflict} was authored by a subset of authors of this submission.}} \\ \vspace{0.1in}}

\author{Sriram Bhyravarapu$^1$, Shiwali Gupta$^2$, Subrahmanyam Kalyanasundaram$^3$\thanks{The third author wishes to acknowledge SERB-DST for supporting this work via grants MTR/2020/000497 and CRG/2022/009400.}, and Rogers Mathew$^4$}
\affil
{The Institute of Mathematical Sciences, HBNI, Chennai$^1$.
	\authorcr
	Department of Computer Science and Engineering, Indian Institute of Technology Hyderabad$^{2, 3, 4}$. \authorcr
	sriramb@imsc.res.in, \{cs21resch11002, subruk, rogers\}@iith.ac.in
}

\date{}
\maketitle
\textbf{keywords:}{ Conflict-free coloring, Extremal graph theory, Maximum degree, Minimum degree, Claw-free graphs} 

\begin{abstract}
A  conflict-free open neighborhood coloring  of a graph is an assignment of colors to the vertices
such that for every vertex there is a color that appears exactly once in its open neighborhood. For a graph $G$, the smallest number of colors required for such a coloring is called the conflict-free open neighborhood (CFON) chromatic number and is denoted by $\chi_{ON}(G)$. By considering closed neighborhood instead of open 
neighborhood, we obtain the analogous notions of conflict-free closed neighborhood 
 (CFCN) coloring, and CFCN chromatic number (denoted by $\chi_{CN}(G)$). The notion of conflict-free coloring was introduced in 2002, and has since received considerable attention. 

 We study CFON and CFCN colorings   
 and show the following results. In what follows, $\Delta$ denotes the maximum degree of the graph. 
 \begin{itemize}
    \item We show that if $G$ is a $K_{1,k}$-free graph then $\chi_{ON}(G)  = O(k \ln \Delta)$. D\k ebski and Przyby\l{}o in 
[JGT 2021] 
had shown that if $G$ is a line graph, then  $\chi_{CN}(G)  = O(\ln \Delta)$. As an open question, they had 
asked if their result could be extended to claw-free ($K_{1,3}$-free) graphs, which is a superclass of line graphs. Since $\chi_{CN}(G) \leq 2\chi_{ON}(G)$, our result answers their open question. It is known that there exists a
separate family of $K_{1.k}$-free graphs with $\chi_{ON}(G) = \Omega(\ln\Delta)$ and $\chi_{ON}(G) = \Omega(k)$.


     \item 

    Let  $\delta\geq 0$ be an integer. 
     We define $f_{CN}(\delta)$  as follows:  $$f_{CN}(\delta) = \max\{\chi_{CN} (G) : G \mbox{ is a graph with minimum degree }  \delta\}.$$ 

     It is easy to see that $f_{CN}(\delta') \geq f_{CN}(\delta)$ when 
     $\delta' < \delta$. Let $c$ be a positive constant.     
     It was shown [D\k{e}bski and Przyby\l o, JGT 2021] that $f_{CN}(c \Delta) = \Theta(\ln \Delta)$.
     In this paper, we show  (i)
         $f_{CN}(\frac{c\Delta}{\ln^{\epsilon} \Delta}) = O(\ln^{1+\epsilon}\Delta)$, where $\epsilon$ is a constant such that $0 \leq \epsilon \leq 1$  and 
         (ii) $f_{CN}(c\Delta^{1 - \epsilon}) = \Omega (\ln^2 \Delta)$, where $\epsilon$ is a constant such that $0 < \epsilon < 0.003$. Together with the known [Bhyravarapu, Kalyanasundaram and Mathew, JGT 2021]
         upper bound $\chi_{CN}(G) = O(\ln^2 \Delta)$, this implies that $f_{CN}(c\Delta^{1 - \epsilon}) = \Theta (\ln^2 \Delta)$.

      \item For a $K_{1, k}$-free graph $G$ on $n$ vertices, 
      we show that $\chi_{CN}(G) = O(\ln k \ln n)$. This bound is asymptotically tight since there are graphs $G$ with 
      $\chi_{CN}(G) = \Omega(\ln^2 n)$ [Glebov, Szab\'o, Tardos, CPC 2014].
 \end{itemize}

\end{abstract}

\section{Introduction}
For a hypergraph $\mathcal{H} = (V, \mathcal{E})$ and a  positive integer $k$,  a coloring $f~:~V \rightarrow [k]$ is a \emph{conflict-free coloring} (or \emph{CF coloring}) of $\mathcal{H}$ if for every $E \in \mathcal{E}$, some vertex in $E$ gets a color that is different from the color received by every other vertex in $E$. The minimum $k$ such that $f~:~V \rightarrow [k]$ is a CF coloring of $\mathcal{H}$ is called the \emph{Conflict-Free chromatic number} (or \emph{CF chromatic number}) of $\mathcal{H}$. We shall use $\chi_{CF}(\mathcal{H})$ to denote the CF chromatic number of $\mathcal{H}$.  The notion of CF coloring has been extensively studied in the context of `neighborhood hypergraphs' of graphs. Let $G$ be a graph with vertex set $V(G)$ and edge set $E(G)$. For a vertex $v \in V(G)$, the set of neighbors of $v$ in $G$ is called the \emph{open neighborhood} of $v$. We use $N_G(v)$ to denote this. The \emph{closed neighborhood} of $v$, denoted by $N_G[v]$, is $\{v\} \cup N_G(v)$.  

\begin{definition}[Conflict-free open neighborhood chromatic number]
A conflict-free coloring concerning the open neighborhoods of $G$  is an assignment of colors to $V(G)$ such that every vertex has a uniquely colored vertex in its open neighborhood. We call such a coloring a \emph{Conflict-Free Open Neighborhood coloring} (or \emph{CFON coloring}). The minimum number of colors required for a CFON coloring of $G$ is called the \emph{Conflict-Free Open Neighborhood chromatic number} (or \emph{CFON chromatic number}), denoted by $\chi_{ON}(G)$.  
\end{definition}

\begin{definition}[Conflict-free closed neighborhood chromatic number]
A conflict-free coloring concerning the closed neighborhoods of $G$  is an assignment of colors to $V(G)$ such that every vertex has a uniquely colored vertex in its closed neighborhood. We call such a coloring a \emph{Conflict-Free Closed Neighborhood coloring} (or \emph{CFCN coloring}). The minimum number of colors required for a CFCN coloring of $G$ is called the \emph{Conflict-Free Closed Neighborhood chromatic number} (or \emph{CFCN chromatic number}), denoted by $\chi_{CN}(G)$.   
\end{definition}

The following result connects CFON and CFCN chromatic numbers of a graph $G$.
\begin{proposition}[Inequality 1.3 in \cite{pach2009conflict}]
\label{prop:CFCNON}
$\chi_{CN}(G) \leq 2 \chi_{ON}(G)$.
\end{proposition}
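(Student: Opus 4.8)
The plan is to prove $\chi_{CN}(G) \leq 2\chi_{ON}(G)$ by taking an optimal CFON coloring and transforming it into a CFCN coloring using at most twice as many colors. The key idea is that a CFON coloring already guarantees, for every vertex $v$, a uniquely colored vertex in the \emph{open} neighborhood $N_G(v)$; the only way this can fail to be a CFCN coloring is that the vertex $v$ itself, now included in the closed neighborhood $N_G[v]$, might share the color of that previously-unique neighbor and thereby destroy its uniqueness. To fix this while controlling the color count, I would duplicate the palette.

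\medskip

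Concretely, let $f : V(G) \to [t]$ be a CFON coloring with $t = \chi_{ON}(G)$ colors. I would first partition $V(G)$ into two parts based on some auxiliary structure — the natural choice is to properly $2$-color an auxiliary graph, but the cleanest route is the following. Introduce a second copy of each color: define a new color set $[t] \times \{0,1\}$, which has $2t$ colors. The plan is to keep each vertex's \emph{base} color from $f$ and choose its \emph{layer} $\{0,1\}$ so that each vertex $v$ avoids colliding with its conflict-free witness. Since $f$ is a CFON coloring, each vertex $v$ has a neighbor $w(v) \in N_G(v)$ whose color $f(w(v))$ is unique in $N_G(v)$. For the closed-neighborhood condition at $v$, I want the color of $w(v)$ to remain unique in $N_G[v]$, which (given uniqueness in the open neighborhood) only requires ensuring that the lifted color of $v$ differs from the lifted color of $w(v)$. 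This can be arranged by choosing layers appropriately so that adjacent vertices connected through the witness relation receive distinct lifted colors.

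\medskip

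The main obstacle I anticipate is making the layer assignment globally consistent, since the witness function $w(\cdot)$ induces a ``functional graph'' (each vertex points to one witness) whose structure must be $2$-colorable in the relevant sense. A robust way to sidestep intricate case analysis is to assign the layer uniformly as a function of the base color's parity or via a direct majority/independent-set argument. Specifically, I would argue: the lifted coloring $g(v) = (f(v), \ell(v))$ is conflict-free for closed neighborhoods provided that for each $v$, either $f(w(v)) \ne f(v)$ (in which case uniqueness is automatic since adding $v$ to $N_G(v)$ cannot create a duplicate of the already-unique color $f(w(v))$), or $f(w(v)) = f(v)$ but $\ell(w(v)) \ne \ell(v)$. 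The only problematic case is $f(w(v)) = f(v)$, and here a simple alternating/parity choice of $\ell$ along the witness edges resolves it; since each vertex needs only to differ from one designated witness, a greedy two-layer assignment succeeds. I expect the verification that the resulting $g$ uses at most $2t$ colors and satisfies the CFCN condition at every vertex to be the routine part, with the witness-consistency argument being the only place requiring care.
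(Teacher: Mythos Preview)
The paper does not give its own proof of this proposition; it is simply quoted from Pach and Tardos \cite{pach2009conflict} and used as a black box. So there is no proof in the paper to compare against.

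As for your proposal, the overall strategy (lift an optimal CFON coloring $f$ to $g(v)=(f(v),\ell(v))$ with $\ell(v)\in\{0,1\}$, and arrange that $\ell(v)\neq\ell(w(v))$ whenever $f(v)=f(w(v))$) is correct and is essentially the standard argument. However, your justification for the layer assignment has a gap. You write that ``since each vertex needs only to differ from one designated witness, a greedy two-layer assignment succeeds.'' Out-degree $1$ alone does not guarantee a proper $2$-coloring of the underlying undirected constraint graph: a functional graph can contain an odd cycle, and greedy then fails. Nor does ``base color's parity'' help, since the problematic case is precisely $f(v)=f(w(v))$, where both endpoints have the same base color.

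What actually makes the argument go through is that the constraint graph is a \emph{forest}, hence bipartite. Here is the missing observation: all vertices on a constraint edge share the same $f$-color $c$, so on any cycle of length $\ge 3$ in the constraint graph, each vertex $v_i$ has two distinct $G$-neighbors $v_{i-1},v_{i+1}$ of color $c$. But at least one cycle vertex $v_i$ must be problematic (otherwise no edge of the cycle would exist), and for such $v_i$ the witness $w(v_i)$ is by definition the \emph{unique} $c$-colored vertex in $N_G(v_i)$ --- a contradiction. Thus the constraint graph has no cycles, and a proper $2$-coloring $\ell$ exists. With this addition, your construction yields a valid CFCN coloring using at most $2\chi_{ON}(G)$ colors.
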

Conflict-free coloring was introduced by Even et al. \cite{ even2003conflict} in the year 2002. Since its introduction, CF coloring of hypergraphs, CFON and CFCN coloring of graphs have been extensively studied \cite{cheilaris2009conflict, glebov2014conflict, pach2009conflict, bhyravarapu2021short, dkebski2022conflict, bhyravarapu_algorithmica, LEVTOV20091521, ElbassioniM06}. The interested reader may refer to the survey by Smorodisnky \cite{smorodinsky2013conflict} on conflict-free coloring. Abel et al. \cite{Abel17} showed that it is NP-complete to determine if a planar graph has a `partial' CFCN coloring with one color (in a partial CFCN coloring, we color only a subset of the vertices such that every vertex sees a unique color in its closed neighborhood). 

Conflict-free coloring and its variants have found applications in a frequency assignment problem in cellular networks,  in battery consumption aspects of sensor networks, in RFID protocols, and in the vertex ranking (or, ordered coloring) problem which finds applications in VLSI design, operations research, etc. \cite{smorodinsky2013conflict}. Below, we give a brief explanation of a very recently discovered application of CF coloring and its variants in the Pliable Index Coding problem (or PICOD problem) (see \cite{ krishnan2021pliable, krishnan2022pliable} for details). 
Consider a communication setup with a server having all the messages available with it, a collection of receivers where each receiver only has a subset of all the messages called `side information', and a noise-free broadcast channel connecting the server with all the receivers. In a given round of communication, a receiver is `satisfied' if it receives a message that is not in its side information currently. The PICOD problem is about finding the smallest possible $\ell$-length vector  that the server needs to send to satisfy all the receivers. In Lemma $3$ of   \cite{krishnan2022pliable}, the authors first construct a hypergraph $\mathcal{H}$ corresponding to the given instance of the PICOD problem and then show the existence of a PICOD scheme of length at most $\chi_{CF}(\mathcal{H})$. The paper uses CF coloring and introduces some new variants to prove improved bounds for the optimal PICOD length. 


\section{Definitions and notations}
For a positive integer $k$, we use $[k]$ to denote the set $\{1, 2, \ldots, k\}$.
Throughout this paper, we consider only graphs that are simple, finite, and undirected. 
For a graph $G$, we use $V(G)$ to denote its vertex set and $E(G)$ to denote its edge set. In the introduction section, we had defined the open and closed neighborhoods, denoted respectively $N_G(v)$ and $N_G[v]$, for a vertex $v$ in $V(G)$. We shall use $d_G(v)$ to denote the degree of $v$ in $G$. That is, $d_G(v) = |N_G(v)|$. For a positive integer $k$, we shall use $K_{1,k}$ to denote the complete bipartite graph with $1$ vertex in one part and $k$ vertices in the other part. 
A graph is \emph{$K_{1,k}$-free} if it does not contain $K_{1,k}$ as an induced subgraph. Graphs that are $K_{1,3}$-free are also known by the name \emph{claw-free} graphs. The \emph{claw number} of a graph $G$
is defined to be the largest $k$ for which $G$ contains $K_{1, k}$ as an induced subgraph.

Given a graph $G$, the \emph{line graph} of $G$, denoted by $L(G)$, is the
graph defined as follows: The vertex set of $L(G)$ is $V(L(G)) = E(G)$ and two vertices $e_1, e_2$ of $L(G)$
are adjacent to each other if and only if in the original graph $G$, the edges $e_1$ and $e_2$ share an end point. 

Given a hypergraph $\mathcal{H} = 
(V, \mathcal{E})$, the \emph{degree} of an element $v \in V$, denoted by $d_{\mathcal{H}}(v)$, is the number of hyperedges that $v$ is present in. 
The \emph{maximum degree of the hypergraph $\mathcal{H}$} is $\max \{d_\mathcal{H}(v)~:~v \in V\}$. 

\section{Our contributions and open questions}
D\k{e}bski and Przyby\l o in \cite{dkebski2022conflict} showed that for a graph $G$ with maximum degree $\Delta$, the CFCN chromatic number of its line graph is $\chi_{CN}(L(G))$ $= O(\ln \Delta)$.
Note that line graphs are a subclass of claw-free graphs (or $K_{1,3}$-free graphs). 
The following 
example 
implies that the upper bound of $O(\ln \Delta)$ from  \cite{dkebski2022conflict} is asymptotically tight. 

\begin{example}
\label{example:line_graph}
Consider $L(K_n)$, the line graph of the complete graph on $n$ vertices. In \cite{dkebski2022conflict}, it was shown that $\chi_{CN}(L(K_n)) = \Omega(\ln n)$. 
Since $\chi_{CN}(G) \le 2 \chi_{ON}(G)$
(Proposition \ref{prop:CFCNON}), this implies $\chi_{ON}(L(K_n)) = \Omega(\ln n)$.
\end{example}

\begin{example}
\label{example:subdivide}
Let $K_n^*$ be the $K_{1,n}$-free graph with maximum degree $n-1$ obtained by subdividing every edge of $K_n$ exactly once. It is known (see \cite{pach2009conflict}) that $\chi_{ON}(K_n^*) = n$. 
\end{example}


Let us first discuss the dependence of the CFON chromatic number of a graph 
on its claw number $k$ and maximum degree $\Delta$.  
Example \ref{example:line_graph} is a family of graphs whose maximum degree is 
$2n-4$ and claw number is 2. This means that $\chi_{ON}(G)$ cannot be a function 
of the form $k \cdot h(\Delta)$ where $h (\Delta) = o(\ln \Delta)$. On the
other hand, Example \ref{example:subdivide} is a family of graphs where 
$\Delta = n - 1$ and $k = n- 1$. This means that  $\chi_{ON}(G)$ cannot be a function 
of the form $g(k) \cdot \ln \Delta$, where $g(k) = O(k^{1 - \epsilon})$, for an $\epsilon > 0$. 

We complement the above observations with an upper bound of $\chi_{ON}(G) = O(k \ln \Delta)$. This implies an upper bound $\chi_{CN}(G) = O(k \ln \Delta)$ by Proposition \ref{prop:CFCNON}.
Our result, proved in Section \ref{sec:max_degree_claw}, generalizes the upper bound of $\chi_{CN}(G) = O(\ln \Delta)$ \cite{dkebski2022conflict} for line graphs. 
As mentioned before, line graphs are a subclass of claw-free graphs. 
In many of the practical applications that motivate conflict-free coloring, the 
underlying graphs happen to be geometric intersection graphs such as unit disk graphs, unit square graphs, etc. \cite{LEVTOV20091521, smorodinsky2013conflict}. These graph classes are usually $K_{1,k}$-free for some constant $k$. For instance, unit disk graphs are $K_{1,6}$-free.

It was 
posed as an open question in \cite{dkebski2022conflict} if the $O(\ln \Delta)$
upper bound could be generalized to claw-free graphs. Our $O(k \ln \Delta)$ upper bound answers this question in the affirmative.
Though Examples \ref{example:line_graph} and \ref{example:subdivide} imply the existence of graphs $G$
for which $\chi_{ON}(G) = \Omega(k)$ and $\chi_{ON}(G) = \Omega(\ln \Delta)$, 
it is of interest to know whether 
the upper bound of $O(k \ln \Delta)$  is tight.




\begin{openquestion} 
Are there $K_{1,k}$-free graphs $G$ with maximum degree $\Delta$ for which $\chi_{ON}(G) = \Omega(k \ln \Delta)$?
\end{openquestion}

In Section \ref{sec:max_degree_claw}, we show that if $G$ is a $K_{1,k}$-free graph on $n$ vertices, then $\chi_{CN}(G) = O(\ln k \ln n)$. This bound is asymptotically tight as it was shown in \cite{glebov2014conflict} that there exist graphs $G$ on $n$ vertices with $\chi_{CN}(G) = \Omega(\ln ^2 n)$. This still leaves
the possibility of the following improvement:

\begin{openquestion}
Can a bound of  $O(\ln k \ln \Delta)$ be obtained for $\chi_{CN}(G)$, for 
$K_{1,k}$-free graphs $G$ with maximum degree $\Delta$?
\end{openquestion}

Now we turn our attention to CFCN chromatic number for graphs of a specified minimum degree.
Let $\Delta$ denote the maximum degree of the graph under consideration and let $c$ be any positive constant. 
{Let $\delta\geq 0$ be an integer.
We define  $$f_{CN}(\delta) := \max \{\chi_{CN}(G)~:~G \mbox{ is a graph with minimum degree equal to } \delta\}.$$ 
It is easy to see that $f_{CN}(\delta') \geq f_{CN}(\delta)$ when $\delta' < \delta$. 
The reader may refer to the discussion at the beginning of Section \ref{sec: 1 - epsilon} for a proof. D\k{e}bski and Przyby\l o in \cite{dkebski2022conflict} showed that $f_{CN}(c \Delta) = \Theta(\ln \Delta)$. 
In Section \ref{subsec:max_degree}, we show that $f_{CN}(c \frac{\Delta}{\ln ^{\epsilon} \Delta }) = O(\ln ^{1 + \epsilon} \Delta)$, where $0 \leq \epsilon \leq 1$.
A natural open question is if this bound is tight.
\begin{openquestion}\label{oq:fcn}
Is $f_{CN}(c\frac{\Delta}{\ln ^{\epsilon} \Delta }) = \Omega(\ln ^{1 + \epsilon} \Delta)$?   
\end{openquestion}

Further, in Section \ref{sec: 1 - epsilon}, we show that $f_{CN}(c \Delta^{1 - \epsilon}) = \Omega(\ln^2 \Delta)$, for $0< \epsilon < 0.003$. 
It was shown by Bhyravarapu, Kalyanasundaram, and Mathew \cite{bhyravarapu2021short} that for any graph $G$, $\chi_{CN}(G) = O(\ln ^2 \Delta)$. Combining both, we get $f_{CN}(c \Delta^{1 - \epsilon}) = \Theta(\ln ^2 \Delta)$. 
An affirmative answer to Open Question \ref{oq:fcn} will help us understand the 
function $f_{CN}$ in its full range.

Analogous to the function $f_{CN}$, we can define a function $f_{ON}$ as 
$$f_{ON}(\delta) := \max \{\chi_{ON}(G)~:~G \mbox{ is a graph with minimum degree equal to } \delta\}.$$ 
Like in the case of CFCN coloring, we have that 
$f_{ON}(\delta') \geq f_{ON}(\delta)$ when $\delta' < \delta$. 
The results in \cite{dkebski2022conflict} imply\footnote{The article \cite{dkebski2022conflict} explicitly discusses only CFCN chromatic number. However, 
the proof techniques of the upper bound 
extend to yield an identical upper bound for CFON chromatic number. The lower bound in \cite{dkebski2022conflict} implies a similar lower bound for CFON chromatic number by an application of  Proposition
\ref{prop:CFCNON}.} 
that $f_{ON}(c \Delta) = \Theta(\ln \Delta)$. It was shown by Pach and Tardos \cite{pach2009conflict} that for any graph $G$ with minimum degree $c \log \Delta$, 
we have $\chi_{ON}(G) = O(\ln ^2 \Delta)$. 
Combining this with our result in Section \ref{sec: 1 - epsilon}, we have $f_{ON}(c \Delta^{1 - \epsilon}) = \Theta(\ln ^ 2 \Delta)$, where $0 < \epsilon < 0.003$.  What is the value of $f_{ON}(\delta)$, when $\delta = o(\ln \Delta)$?  It is known that $\chi_{ON}(G) \leq \Delta + 1$, for any graph $G$. This bound is tight as $\chi_{ON}(K_n^*) = n$. Thus, $f_{ON}(c) = \Theta(\Delta)$. This leaves us with the following open question. 

\begin{openquestion}
  What is the value of $f_{ON}(\delta)$ when $ \delta = o(\ln \Delta)$ and $\delta$ is not any absolute constant?   
\end{openquestion}

Theorem 1.2 in \cite{pach2009conflict} implies that $f_{ON}(\delta) = O(\delta \cdot \Delta^{\frac{2}{\delta}} \cdot \ln \Delta)$. However, it is not clear whether this bound is tight in the range of values of $\delta$ that we are interested in. 
\vspace{0.1in}


\section{Auxiliary results}
\label{sec:auxiliary_lemmas}
In this section, we state a few known auxiliary lemmas and theorems that will be used later.
We state the local lemma that will be used in the proof of Lemma \ref{lem_near_uniform_hypergraph} and Chernoff bound that will be used in the proof of Theorem \ref{thm:lower_bound}. 

\begin{lemma}[\emph{The Local Lemma}, \cite{lovaszlocallemma}] \label{lem:local} Let $A_1, \ldots , A_n$ be events in an arbitrary probability space. Suppose that each event $A_i$ is mutually independent of a set of all the other events $A_j$ but at most $d$, and that $Pr[A_i] \leq p$ for all $i \in [n]$. If  $4pd \leq 1$, then $Pr[\cap _{i=1}^n \overline{A_i}] > 0$.  
\end{lemma}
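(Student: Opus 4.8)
The plan is to prove this symmetric form of the Local Lemma by induction, establishing a quantitative statement that is stronger than the conclusion we actually need: that conditioning on the non-occurrence of any collection of the other bad events cannot inflate the probability of a given bad event beyond $2p$. Precisely, I would prove by induction on $|S|$ that for every index $i$ and every $S \subseteq [n] \setminus \{i\}$,
\[
  \Pr\!\left[A_i \;\middle|\; \bigcap_{j \in S} \overline{A_j}\right] \le 2p. \qquad (\star)
\]
The base case $S = \emptyset$ is immediate from the hypothesis $\Pr[A_i] \le p$. Note that $4pd \le 1$ with $d \ge 1$ forces $p \le \tfrac14$, so $2p \le \tfrac12 < 1$ and the bound $2p$ is meaningful throughout; the degenerate case $d = 0$ is handled directly since all events are then mutually independent.

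For the inductive step with $|S| > 0$, I would partition $S$ into the set $S_1$ of indices on whose events $A_i$ genuinely depends (there are at most $d$ of these) and the remaining set $S_2$, on whose events $A_i$ is independent. Writing $B = \bigcap_{j \in S_1} \overline{A_j}$ and $C = \bigcap_{j \in S_2} \overline{A_j}$, the key identity is
\[
  \Pr[A_i \mid B \cap C] = \frac{\Pr[A_i \cap B \mid C]}{\Pr[B \mid C]}.
\]
I would bound the numerator above by $\Pr[A_i \mid C] = \Pr[A_i] \le p$, invoking that $A_i$ is independent of the $\sigma$-algebra generated by the events indexed by $S_2$. For the denominator I would expand $\Pr[B \mid C]$ as a telescoping product of factors of the form $1 - \Pr[A_{j_k} \mid \cdots]$, observe that each conditioning set arising is strictly smaller than $S$, apply the induction hypothesis to bound each factor below by $1 - 2p$, and use $|S_1| \le d$ to get $\Pr[B \mid C] \ge (1 - 2p)^d$. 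Combining the two bounds gives $\Pr[A_i \mid B \cap C] \le p/(1 - 2p)^d$, and the hypothesis $4pd \le 1$ (i.e.\ $2p \le \tfrac{1}{2d}$) closes the induction, since it makes $(1 - 2p)^d \ge (1 - \tfrac{1}{2d})^d \ge \tfrac12$.

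With $(\star)$ in hand, the conclusion follows by telescoping over all $n$ events:
\[
  \Pr\!\left[\bigcap_{i=1}^n \overline{A_i}\right] = \prod_{i=1}^n \left(1 - \Pr\!\left[A_i \;\middle|\; \bigcap_{j < i} \overline{A_j}\right]\right) \ge (1 - 2p)^n > 0,
\]
where each factor is bounded below using $(\star)$ and positivity holds because $2p < 1$.

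I expect the main obstacle to be the careful handling of conditional independence rather than any heavy computation. One must use that $A_i$ is independent of every event in the $\sigma$-algebra generated by the non-dependent events (pairwise independence would not suffice), and one must verify that every conditioning set appearing in the product expansion of $\Pr[B \mid C]$ is genuinely smaller than $S$, so that the induction is well-founded. The only real calculation to isolate is the elementary inequality $(1 - \tfrac{1}{2d})^d \ge \tfrac12$, valid for all integers $d \ge 1$, which is exactly what makes the threshold $4pd \le 1$ sufficient.
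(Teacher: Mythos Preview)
Your proof is correct and is the standard inductive argument for the symmetric Local Lemma. Note, however, that the paper does not supply its own proof of this statement: it is listed among the auxiliary results in Section~\ref{sec:auxiliary_lemmas} and simply cited from Erd\H{o}s and Lov\'asz~\cite{lovaszlocallemma}, so there is nothing in the paper to compare against.
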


 \begin{theorem}[Chernoff Bound, Corollary 4.6 in \cite{mitzenmacher}]
\label{thm_Chernoff}
Let $X_1, \ldots , X_n$ be independent Poisson trials such that $Pr[X_i] = p_i$. Let $X = \sum_{i=1}^n X_i$ and $\mu = E[X]$. For $0 < \delta < 1$, 
 $Pr[|X-\mu| \geq \delta \mu] \leq 2e^{-\mu \delta^2/3}$. 
\end{theorem}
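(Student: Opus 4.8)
The plan is to establish the two one-sided tail bounds separately by the exponential moment (Chernoff) method and then combine them via a union bound. First I would invoke the exponential Markov inequality: for any $t>0$ and any threshold $a$, we have $Pr[X \geq a] = Pr[e^{tX} \geq e^{ta}] \leq e^{-ta}\,E[e^{tX}]$. Since the $X_i$ are independent, the moment generating function factorizes, $E[e^{tX}] = \prod_{i=1}^n E[e^{tX_i}]$, and for a Poisson trial (that is, an indicator with $Pr[X_i = 1] = p_i$) we have $E[e^{tX_i}] = 1 + p_i(e^t - 1) \leq \exp\!\big(p_i(e^t-1)\big)$, using $1 + x \leq e^x$. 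Multiplying over $i$ and writing $\mu = \sum_i p_i$ yields the clean estimate $E[e^{tX}] \leq \exp\!\big(\mu(e^t-1)\big)$.

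For the upper tail I would take $a = (1+\delta)\mu$ and optimize over $t$; the minimizing choice is $t = \ln(1+\delta) > 0$, which gives $Pr[X \geq (1+\delta)\mu] \leq \left(\frac{e^\delta}{(1+\delta)^{1+\delta}}\right)^{\mu}$. The lower tail is handled symmetrically, applying the same method with $t<0$ (equivalently, to $-X$), producing $Pr[X \leq (1-\delta)\mu] \leq \left(\frac{e^{-\delta}}{(1-\delta)^{1-\delta}}\right)^{\mu}$. Each right-hand side is the exponential of $\mu$ times a function of $\delta$ alone.

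The remaining work, and the step I expect to be the main technical obstacle, is the analytic simplification of these two bounds into the uniform form $e^{-\mu\delta^2/3}$ valid throughout $0<\delta<1$. For the upper tail this reduces to proving $\delta - (1+\delta)\ln(1+\delta) \leq -\delta^2/3$ on $(0,1)$, which I would verify by a Taylor expansion of $\ln(1+\delta)$ (or by checking that the difference of the two sides vanishes at $\delta=0$ and is monotone). The lower-tail function obeys the even stronger inequality $-\delta - (1-\delta)\ln(1-\delta) \leq -\delta^2/2 \leq -\delta^2/3$, treated the same way. Hence both $Pr[X \geq (1+\delta)\mu]$ and $Pr[X \leq (1-\delta)\mu]$ are at most $e^{-\mu\delta^2/3}$.

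Finally I would assemble the two-sided statement. Since the events $\{X \geq (1+\delta)\mu\}$ and $\{X \leq (1-\delta)\mu\}$ are disjoint and their union is exactly $\{|X-\mu| \geq \delta\mu\}$, the union bound gives $Pr[|X-\mu| \geq \delta\mu] \leq 2e^{-\mu\delta^2/3}$, as claimed.
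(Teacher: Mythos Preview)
The paper does not prove this statement at all: it is listed in Section~\ref{sec:auxiliary_lemmas} as a known auxiliary result quoted from \cite{mitzenmacher}, with no argument supplied. Your proposal is the standard textbook proof (essentially the one in Mitzenmacher--Upfal): exponential Markov inequality, factorization of the moment generating function using independence, the bound $1+x\le e^x$ to get $E[e^{tX}]\le e^{\mu(e^t-1)}$, optimization over $t$ to obtain the two one-sided Chernoff forms, and then the calculus check that each exponent is at most $-\delta^2/3$ on $(0,1)$. All of these steps are correct as you outlined them, so your write-up would simply be supplying the omitted proof rather than offering an alternative to anything in the paper.
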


The theorem below gives an upper bound to the CF chromatic number of a hypergraph in terms of its maximum degree. 

\begin{theorem}[Theorem 1.1(b) in \cite{pach2009conflict}]
\label{thm_pach_tardos}
Let $\mathcal{H}$ be a hypergraph and let $\Delta$ be the maximum degree of any vertex in $\mathcal{H}$. Then, $\chi_{CF}(\mathcal{H}) \leq \Delta + 1$.  
\end{theorem}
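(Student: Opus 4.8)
The plan is to prove the bound by induction on the number of vertices $|V|$, deleting one vertex at a time and --- crucially --- deleting it from every hyperedge as well. Shrinking the edges in this way keeps the maximum degree at most $\Delta$ in the hypergraph we recurse on, while simultaneously guaranteeing that each of its surviving hyperedges is already conflict-free; this second property is exactly what makes the additive $\Delta+1$ bound fall out.

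First I would set up the induction. For the base case $|V|\le 1$ a single color clearly suffices. For the inductive step, fix an arbitrary vertex $v$ and form the hypergraph $\mathcal{H}' = (V\setminus\{v\},\ \{E\setminus\{v\} : E\in\mathcal{E},\ E\setminus\{v\}\ne\emptyset\})$. Removing $v$ cannot increase any vertex's degree, so $\mathcal{H}'$ has maximum degree at most $\Delta$, and by the induction hypothesis it admits a conflict-free coloring $f'$ using at most $\Delta+1$ colors.

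Next I would extend $f'$ to $v$, leaving $f=f'$ everywhere else. Every hyperedge $E$ with $v\notin E$ is unchanged and stays conflict-free. The only work is in the hyperedges $E\ni v$, and here is the observation that makes the degree bound pay off: the restricted edge $E\setminus\{v\}$, when nonempty, is a hyperedge of $\mathcal{H}'$ and is therefore conflict-free under $f'$, so it contains a color that appears exactly once. Re-inserting $v$ can spoil the conflict-freeness of $E$ only in the single case where $E\setminus\{v\}$ has exactly one such singleton color $s$ and we set $f(v)=s$; if $E\setminus\{v\}$ has two or more singleton colors then one of them survives whatever we do, and the degenerate edge $E=\{v\}$ imposes no constraint since $v$ is trivially unique in it. Hence each hyperedge through $v$ forbids at most one color for $v$, and as $v$ lies in at most $\Delta$ hyperedges, at most $\Delta$ colors are forbidden. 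With $\Delta+1$ colors available, at least one admissible color remains, and assigning it to $v$ completes a conflict-free coloring of $\mathcal{H}$.

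The heart of the argument --- and the step I would be most careful to justify --- is the decision to \emph{shrink} the hyperedges upon deletion. A more naive induction that removes $v$ but leaves the remaining edges intact (or a straight greedy over the vertices) runs directly into trouble with large hyperedges: the already-colored portion of such an edge may use all $\Delta+1$ colors with no singleton, thereby forbidding \emph{every} color for the vertex we are placing. Shrinking the edges is precisely what circumvents this obstacle, because it forces each surviving edge to be conflict-free and so caps the number of colors forbidden per incident edge at one. The remaining points are pure bookkeeping: edges that become empty ($E=\{v\}$) simply drop out of $\mathcal{H}'$ and require no witness, and distinct edges that coincide after removing $v$ merge into a single edge of $\mathcal{H}'$, which can only decrease the number of forbidden colors; neither affects the bound of $\Delta$ forbidden colors.
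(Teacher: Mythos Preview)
The paper does not supply its own proof of this statement; it is quoted as an auxiliary result from Pach and Tardos \cite{pach2009conflict} and used as a black box. Your inductive argument is correct: shrinking the hyperedges upon deletion of $v$ is exactly the right move, since it guarantees each surviving edge already carries a singleton color under $f'$, and hence each of the at most $\Delta$ edges through $v$ can forbid at most one color when you extend. The edge cases (singleton edges $E=\{v\}$, and distinct edges that merge after removing $v$) are handled correctly. This is essentially the standard proof of the Pach--Tardos bound, so there is nothing to contrast with the paper itself.
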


Finally, we prove the following lemma for CF chromatic number of near uniform hypergraphs. We will use this lemma in the proofs of Theorems \ref{thm_claw_max_degree} 
and \ref{thm:max_degree}.

\begin{lemma} 
\label{lem_near_uniform_hypergraph}
Let $\mathcal{H} = (V,\mathcal{E})$ be a hypergraph where (i) every hyperedge intersects with at most $\Gamma$ other hyperedges, and (ii) for every hyperedge $E \in \mathcal{E}$, $r \le |E| \leq \ell r$, 
where $\ell \geq 1$ is some integer and $r \ge 2 \log_2 (4 \Gamma)$. Then, 
$\chi_{CF}(\mathcal{H}) \leq  e \ell r$, where $e$ is the base of natural logarithm. 
\end{lemma}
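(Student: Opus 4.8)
The plan is to apply the Lovász Local Lemma (Lemma \ref{lem:local}) to a uniformly random coloring. I would color each vertex of $\mathcal{H}$ independently and uniformly at random with one of $k := e\ell r$ colors. For a hyperedge $E$, let $A_E$ be the bad event that no color appears exactly once in $E$; if no $A_E$ occurs then the coloring is conflict-free, so it suffices to show $\Pr[\bigcap_E \overline{A_E}] > 0$. The event $A_E$ is determined solely by the colors of the vertices in $E$, so $A_E$ is mutually independent of $\{A_{E'} : E' \cap E = \emptyset\}$; by hypothesis (i) the number of hyperedges meeting $E$ is at most $\Gamma$, so the dependency degree is $d = \Gamma$. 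Thus Lemma \ref{lem:local} applies as soon as $4\,\Pr[A_E]\,\Gamma \le 1$, i.e. $\Pr[A_E] \le \tfrac{1}{4\Gamma}$. Since hypothesis (ii) gives $r \ge 2\log_2(4\Gamma)$, equivalently $4\Gamma \le 2^{r/2}$, it is enough to prove the uniform bound $\Pr[A_E] \le 2^{-r/2}$ for every hyperedge $E$.

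The crux is this tail estimate. Writing $s = |E|$ (so $r \le s \le \ell r$ and hence $s/k \le 1/e$), I would compute $\Pr[A_E]$ exactly through the occupancy / exponential-generating-function identity
\[
\Pr[A_E] \;=\; \frac{s!}{k^{s}}\,[x^{s}]\,(e^{x}-x)^{k},
\]
where the factor $e^{x}-x = \sum_{m \neq 1} x^{m}/m!$ records that each color is forbidden to occur exactly once. Using the elementary coefficient bound $[x^{s}]f \le f(\rho)/\rho^{s}$, valid for any $\rho>0$ since $(e^{x}-x)^{k}$ has nonnegative coefficients, at the natural choice $\rho = s/k$, together with Stirling's estimate $s! \le e\sqrt{s}\,(s/e)^{s}$, collapses the right-hand side to a bound of the shape
\[
\Pr[A_E] \;\le\; e\sqrt{s}\,\exp\!\Big(-s + k\ln\big(e^{s/k}-\tfrac{s}{k}\big)\Big).
\]
Because $s/k \le 1/e$, the logarithmic term is $O(s^{2}/k) = O(s/e)$, so the exponent is at most $-c\,s$ for an explicit positive constant $c$; a direct evaluation in the tightest case ($\ell=1$, $s=r$, where $s/k = 1/e$) gives $c \approx 0.8$. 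Since the exponent is decreasing in $s$ on $[r,\ell r]$, the worst case is $s = r$, and one verifies that $e\sqrt{r}\,e^{-cr} \le 2^{-r/2}$ for all $r \ge 4$.

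The main obstacle is precisely this constant-chasing in the tail bound: the factor $e$ in the palette size $e\ell r$ is tight, so the estimate must be carried out sharply. Cruder arguments lose the constant and fail to beat $2^{-r/2}$ — for instance, a union bound over monochromatic matchings, or the implication ``no singleton color $\Rightarrow$ at most $s/2$ distinct colors appear'' fed into a bounded-differences inequality, both only yield bounds of the form $e^{-0.2s}$, which is weaker than $2^{-r/2} = e^{-0.347 r}$. The saddle-point coefficient bound above is what retains the correct constant close to $1$. The only place small $r$ could cause trouble is the prefactor $e\sqrt{r}$, and this is absorbed by the hypothesis $r \ge 2\log_2(4\Gamma) \ge 4$, which simultaneously forces $r$ large enough for $e\sqrt{r}\,e^{-cr} \le 2^{-r/2}$ and guarantees $2^{-r/2} \le \tfrac{1}{4\Gamma}$. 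With $\Pr[A_E] \le 2^{-r/2} \le \tfrac{1}{4\Gamma}$ established for every hyperedge, Lemma \ref{lem:local} yields $\Pr[\bigcap_E \overline{A_E}] > 0$, so a conflict-free coloring using $e\ell r$ colors exists, proving $\chi_{CF}(\mathcal{H}) \le e\ell r$.
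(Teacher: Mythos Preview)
Your argument is correct, but it is considerably more elaborate than necessary, and the paper's proof is exactly the ``crude'' route you dismissed. Both proofs color uniformly at random from $e\ell r$ colors and finish with the Local Lemma; the only difference is how the tail bound $\Pr[A_E]\le 2^{-r/2}$ is obtained.

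You attack the exact bad event via the occupancy identity $\Pr[A_E]=\frac{s!}{k^s}[x^s](e^x-x)^k$ and a saddle-point coefficient bound. The paper instead passes to the \emph{proxy} event ``$E$ sees at most $|E|/2$ distinct colors'' (which contains the real bad event, by pigeonhole) and bounds it by a single union bound over color subsets:
\[
\Pr[A_E]\;\le\;\binom{e\ell r}{m/2}\Bigl(\frac{m/2}{e\ell r}\Bigr)^{m}\;\le\;\Bigl(\frac{m}{2\ell r}\Bigr)^{m/2}\;\le\;2^{-m/2}\;\le\;\frac{1}{4\Gamma},
\]
where the second inequality uses $\binom{n}{k}\le(en/k)^k$ (the two factors of $e$ cancel), the third uses $m\le \ell r$, and the last uses $m\ge r\ge 2\log_2(4\Gamma)$. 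So the implication ``no singleton color $\Rightarrow$ at most $s/2$ colors'' is not too weak at all; what loses the constant is feeding it into a bounded-differences inequality as you suggested. Fed into the direct union bound above, it gives $2^{-m/2}$ in one line, which is precisely the target. Your EGF approach recovers the same bound (indeed a slightly better exponent $\approx 0.8s$ versus $0.35s$), but at the cost of Stirling, a saddle-point estimate, and a numerical check at $r=4$, none of which is needed.
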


\begin{proof}
For each vertex in $V$, assign a color that is chosen independently and uniformly at random from a set of $e \ell r$ colors. We will first 
show that the probability of this coloring being bad for an edge is small, and then use Local Lemma to show the existence of conflict-free coloring for $\mathcal H$ using at most $e \ell r$ colors.

Consider a hyperedge $E \in \mathcal{E}$ with $m: = |{E}|$.
By assumption, we have $r \leq m \leq \ell r$. 
Let $A_E$ denote the bad event that $E$ is colored with  $\leq |E|/2$ 
colors. Note that if $A_E$ does not occur, then $E$ is colored with 
$> |E|/2$ colors, hence there is at least one color that appears exactly once in $E$.

\begin{eqnarray*}
    Pr[A_E] & \leq & \binom{e \ell r}{m/2} \left( \frac{m/2}{e \ell r}\right)^m \\
    & \leq & \left(\frac{e^2 \ell r}{m/2} \right)^{m/2}\left( \frac{m/2}{e \ell r}\right)^m \quad \quad (\mbox{since  }\binom{n}{k} \leq \left(\frac{en}{k}\right)^k) \\
    & = & \frac{\left(m/2\right)^{m/2}}{(\ell r)^{m/2}} \quad = \quad \left( \frac{m}{2 \ell r} \right)^{m/2} \\
    & \leq & \left( 1/2 \right)^{m/2} 
     \quad \leq \quad  \frac{1}{4 \Gamma}. 
\end{eqnarray*}
Here the penultimate inequality follows since $m \leq \ell r$, and the last
inequality follows since $m \geq 2 \log_2 (4 \Gamma)$.

We apply the Local Lemma (Lemma \ref{lem:local}) on the events $A_E$ for all hyperedges $E \in \mathcal E$.
Since each hyperedge intersects with at most $\Gamma$ other hyperedges, and $4\cdot \frac{1}{4\Gamma}\cdot \Gamma \leq 1$,  
we get $Pr[\cap_{E \in \mathcal{E}}(\overline A_E)] > 0$. That is, there is a conflict free coloring of $\mathcal{H}$ that uses at most $e\ell r$ colors. This completes the proof of the lemma. 
\end{proof}

\section{$K_{1,k}$-free graphs}
\label{sec:max_degree_claw}
In this section, we show improved upper bounds for CFON and CFCN chromatic numbers on
$K_{1,k}$-free graphs.

 \begin{theorem}
\label{thm_claw_max_degree}
Let $G$ be a $K_{1,k}$-free graph with maximum degree $\Delta \geq 2$ having no isolated vertices. Then, $\chi_{ON}(G) = O(k\ln \Delta)$. 
\label{klogdeltaproof}
\end{theorem}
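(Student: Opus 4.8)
The plan is to view a CFON coloring of $G$ as a conflict-free coloring of the open-neighborhood hypergraph whose hyperedges are the sets $N_G(v)$, and then to coax this hypergraph into a form where Lemma~\ref{lem_near_uniform_hypergraph} applies. The obstruction is that neighborhoods have wildly varying sizes (anywhere from $1$ to $\Delta$), whereas the near-uniform lemma is only useful when all hyperedges have roughly a common size and that size is $\Theta(\log\Delta)$. So the first step is to pick a subset $S \subseteq V(G)$, color $S$ by applying Lemma~\ref{lem_near_uniform_hypergraph} to the trace hypergraph $\{N_G(v)\cap S : v\in V(G)\}$, and assign every vertex of $V(G)\setminus S$ one fixed spare color $c_0$. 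The reason this reduction is sound is that the only non-$c_0$ colored vertices inside any $N_G(v)$ are precisely those of $N_G(v)\cap S$; hence a color occurring exactly once in $N_G(v)\cap S$ occurs exactly once in all of $N_G(v)$, so a conflict-free coloring of the trace hypergraph lifts to a CFON coloring of $G$ at the cost of a single extra color.

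The crucial use of $K_{1,k}$-freeness is in bounding the sizes of the trace hyperedges $N_G(v)\cap S$ from above, uniformly over all vertices and all degrees simultaneously. I would build $S$ as a union $I_1 \cup \cdots \cup I_m$ of $m = \Theta(\log\Delta)$ independent sets. Since $G$ is $K_{1,k}$-free, the subgraph induced on $N_G(v)$ has independence number at most $k-1$, so $|N_G(v)\cap I_j|\le k-1$ for each $j$, and therefore $|N_G(v)\cap S|\le (k-1)m = O(k\log\Delta)$ for every vertex $v$ at once. This single observation is what replaces the usual degree-bucketing (which would cost a spurious extra $\log\Delta$ factor), and it is exactly the reason the claw number $k$, rather than $\Delta$, shows up in the bound. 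Setting $\Gamma := \Delta^2$ (two neighborhoods meet only if their centers lie within distance two, and a fixed center has at most $\Delta^2$ such partners), $r := \lceil 2\log_2(4\Gamma)\rceil = \Theta(\log\Delta)$, and $\ell := k-1$, Lemma~\ref{lem_near_uniform_hypergraph} colors the trace hypergraph with $e\ell r = O(k\log\Delta)$ colors.

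To actually invoke the lemma we also need the matching lower bound $|N_G(v)\cap S|\ge r$ for the relevant $v$. Here I would choose the independent sets through a randomized procedure (random maximal independent sets, or random proper-coloring classes), and use concentration (Theorem~\ref{thm_Chernoff}) together with the Local Lemma (Lemma~\ref{lem:local}) to certify that every vertex of degree above a suitable threshold meets $S$ in at least $r$ vertices, exploiting that each neighborhood interacts with at most $\Delta^2$ others. Combining the two bounds puts every relevant trace hyperedge into the window $[r,(k-1)m]$, so the near-uniform lemma yields an $O(k\log\Delta)$-coloring, and the spare color $c_0$ completes the CFON coloring of $G$.

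The step I expect to be the main obstacle is the bookkeeping at the low end of the degree spectrum, where Lemma~\ref{lem_near_uniform_hypergraph} cannot apply at all: a vertex $v$ with $d_G(v) < r$ can never meet $S$ in $r$ vertices, so these vertices must be argued separately. Vertices of degree one are free, since their single neighbor is trivially the unique color in $N_G(v)$ regardless of the coloring; for $2\le d_G(v) < r$ the aim is to arrange the selection (again via a Local Lemma argument) so that such a neighborhood meets $S$ in at most one vertex, whereupon that lone selected neighbor realizes a color occurring exactly once in $N_G(v)$. Making a single selection $S$ satisfy these opposing size requirements — few hits for low-degree centers, at least $r$ hits for high-degree centers, and the independence structure throughout — across the entire degree range is where the real technical work lies; by contrast the conceptual heart of the argument is the clean inequality $|N_G(v)\cap I_j|\le k-1$, which converts $K_{1,k}$-freeness directly into the factor $k$.
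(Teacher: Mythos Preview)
Your core insight---build $S$ as a union of $m=\Theta(\log\Delta)$ independent sets so that $K_{1,k}$-freeness forces $|N_G(v)\cap S|\le(k-1)m$ uniformly, then invoke Lemma~\ref{lem_near_uniform_hypergraph} on the trace hypergraph---is exactly the engine of the paper's proof. The paper even obtains the matching lower bound $|N_G(v)\cap S|\ge r$ without any randomness: it takes a proper $(\Delta+1)$-coloring of (a large induced subgraph of) $G$, greedily shifts each vertex down to the earliest color class in which it has no neighbor, and sets $S=B:=L_1\cup\cdots\cup L_{12\ln\Delta}$. After this shift, every vertex in a later class $L_i$ with $i>12\ln\Delta$ has a neighbor in \emph{each} of $L_1,\ldots,L_{12\ln\Delta}$, so $|N_G(v)\cap B|\ge 12\ln\Delta$ holds deterministically; Chernoff and the Local Lemma are never invoked for this step.

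The genuine gap in your plan is the low-degree regime. Requiring $|N_G(v)\cap S|\le 1$ for $2\le d_G(v)<r$ does not suffice: if $|N_G(v)\cap S|=0$ then every neighbor of $v$ carries the spare color $c_0$ and $N_G(v)$ has no uniquely colored vertex. You would actually need $|N_G(v)\cap S|=1$ exactly, and it is not clear this can be enforced simultaneously with the high-degree lower bound---the two neighbors of a degree-$2$ vertex may both be demanded by adjacent high-degree centers, and the Local Lemma condition $4pd\le 1$ with $d=\Delta^2$ leaves no room for inclusion probabilities large enough to hit every high-degree neighborhood $\Theta(\log\Delta)$ times. The paper does \emph{not} try to make a single $S$ serve all vertices. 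Instead it first pulls out a maximal independent set $A=A_1\cup A_2$, partitions $V(G)$ into pieces $A_1,A_2,X,B,C$, and conflict-free colors five auxiliary hypergraphs with pairwise disjoint palettes: only the vertices of $C$ are handled via Lemma~\ref{lem_near_uniform_hypergraph}, while the remaining four hypergraphs each have maximum \emph{hypergraph} degree at most $k-1$ or $(k-1)\cdot 12\ln\Delta$ (again by the $|N_G(v)\cap I|\le k-1$ bound, applied this time to $A$ or to individual $L_j$'s) and are colored directly by Theorem~\ref{thm_pach_tardos}. So the bookkeeping you correctly flagged as the main obstacle is resolved not by refining the choice of $S$, but by switching tools---replacing the near-uniform lemma with Theorem~\ref{thm_pach_tardos} on several side hypergraphs.
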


\begin{proof}
Let $A$ be a maximal independent set of $G$. Let \review{$A_1 : = \{v \in A : d_G(v) \le 12 k \ln \Delta \}$} and $A_2 : = A \setminus A_1$. Let $X : = \bigcup_{v \in A_1} N_G(v)$. 

Next, we obtain $G'$ by removing all the vertices from $G$ that belong to $A \cup X$.
In other words, $G' = G[V \setminus (A \cup X)]$.
Since $A$ is a maximal independent set in $G$, every $v \in V \setminus (A \cup X)$ has a neighbor, say $w$, in $A$. Since no vertex in $A_1$ has a neighbor in $V(G')$, $w \in A_2$.

We start with a proper coloring of $G'$, say $h: V(G') \longrightarrow [s] = \{1, 2, \dots, s\}$  that uses at most $\Delta + 1$ colors.  Let $L_1, L_2, \ldots, L_{s}$ be the color classes with respect to the coloring $h$, where $s \leq \Delta + 1$.

\begin{observation}\label{obs:1}
For every $2 \le i \le s$, we may assume that every vertex $v \in L_i$ has a neighbor in each $L_j$, $1 \le j < i$. If $v$ has no neighbor in $L_j$, $j < i$, we can move $v$ to $L_j$.
\end{observation}


\begin{observation}\label{obs:2}
    Since $G$ is $K_{1,k}$-free,  any vertex in $G$ has at most $k - 1$ neighbors in $L_i$, for every $i \in [s] $.
\end{observation}

\begin{observation}\label{obs:3}
    Consider a subset $\widehat{A} \subseteq A$, and let $\widehat{\mathcal H} = (\widehat{V}, \widehat{\mathcal E})$ be defined as follows: $\widehat{V} = \bigcup_{v \in \widehat{A}} N_G(v)$ and $\widehat{\mathcal E} = \{ N_G(v) : v \in \widehat{A} \}$. Since $G$ is $K_{1,k}$-free and $A$ is an independent set, the maximum degree of
    $\widehat{\mathcal H}$ is at most $k-1$.
\end{observation}

 If $s > 12 \ln \Delta$, then we define $B := L_1 \cup L_2 \cup \dots \cup L_{12 \ln \Delta}$ and $C = V(G') \setminus B$. Otherwise, we define $B := L_1 \cup L_2 \cup \dots \cup L_{s}$ and $C = \emptyset$.

 We obtain the desired CFON coloring of $G$ by conflict-free coloring five hypergraphs, $\mathcal{H}_1, \dots, \mathcal{H}_5 $, which are defined below. Note that the set of colors we use to color each hypergraph $\mathcal{H}_i$ is disjoint from the set of colors we use to color any other hypergraph $\mathcal{H}_j$, $1 \le i < j \le 5$. 
\begin{itemize}
    \item 
 Suppose $C \neq \emptyset$. We define a hypergraph $\mathcal{H}_1 = (V_1, \mathcal{E}_1)$, where $V_1 = B$ and $\mathcal{E}_1 = \{N_{G'}(v) \cap B : v \in C\}$. 
\review{The following observation follows from Observations \ref{obs:1} and \ref{obs:2} 
\begin{observation}
 \label{obv:degree_in_B}
 Every vertex in $C$ has at least  $12\ln \Delta$ neighbors in $B$. Further, for every $v \in V(G)$, $|N_G(v) \cap B| \leq 12 (k-1) \ln \Delta$. 
\end{observation}} 
 So for each $E \in \mathcal E_1$, we have $12 \ln \Delta \leq |E| \leq 12 (k-1) \ln \Delta$.  
By applying Lemma \ref{lem_near_uniform_hypergraph} to $\mathcal H_1$, with $\ell = k - 1, r = 12 \ln \Delta$, and $\Gamma \le \Delta^{2}$, we get $\chi_{CF}(\mathcal{H}_1)  \le e \cdot (k -1)\cdot 12 \ln \Delta$. 

Here, $\Gamma$ denotes the number of other hyperedges a given hyperedge $E:= N_{G'}(v) \cap B$, for some $v \in C$, is overlapping with. 
Since the maximum degree is $\Delta$, and since $E \subseteq N_G(v)$, it follows that 
$\Gamma \leq \Delta^2$.
The conflict-free coloring of hypergraph $\mathcal{H}_1$ ensures that all the vertices in $C$, see a unique color in their open neighborhood.

\item 
Similarly, we define a hypergraph $\mathcal{H}_2 = (V_2, \mathcal{E}_2)$, where $V_2 = A_2$ and $\mathcal{E}_2 = \{N_{G}(v) \cap A_2 : v \in B\}$. By Observation \ref{obs:2}, the maximum degree of $\mathcal{H}_2$ is at most $ (k - 1) \cdot 12 \ln \Delta$. Hence by Theorem \ref{thm_pach_tardos}, we have  $\chi_{CF}(\mathcal{H}_2) \le (k - 1) \cdot 12\ln\Delta + 1$. This conflict-free coloring of $\mathcal{H}_2$ 
ensures that every $v \in B$ sees a unique color in its open neighborhood.

\item 
Let $\mathcal{H}_3 = (V_3, \mathcal{E}_3)$ be a hypergraph, where $V_3 = A_1$ and $\mathcal{E}_3 = \{N_G(v) \cap A_1 : v \in X \}$. 
By choice of vertices in $A_1$, the maximum degree of $\mathcal{H}_3$ is at most \review{$12k \ln \Delta$. Hence by Theorem \ref{thm_pach_tardos}, $\chi_{CF}(\mathcal{H}_3) \le 12k \ln \Delta + 1$.} The conflict-free coloring of $\mathcal{H}_3$ ensures that every $v \in X$ sees a unique color in its open neighborhood.

\item 
Now we need to handle the needs of the vertices in $A$. 
We first partition $A$ as follows: let $A_X = \{v \in A : N_G(v) \cap X \neq \emptyset \}$ and $A_{\overline{X}} = A \setminus A_X$. \review{From the  definitions of $A_1, A_2, A_X, A_{\overline{X}}$, we can say that 
$(A_1, A_2)$ and $(A_X, A_{\overline{X}})$ are two bipartitions of $A$ that satisfy (a) $A_1 \subset A_X$, and (b) $A_{\overline{X}} \subseteq A_2$. 
From the definition of $A_{\overline{X}}$, no vertex in $A_{\overline{X}}$ has a neighbor in $X$. Further, since $A_{\overline{X}} \subseteq A_2$ and every vertex in $A_2$ has degree greater than $12k\ln \Delta$, we have the following observation.
\begin{observation}
\label{obv:deg_A_X_bar}    
For every $v \in A_{\overline{X}}$, (i) $N_G(v) \subseteq B \cup C$, and (ii) $|N_G(v)| > 12k\ln \Delta$. 
\end{observation}}
We define a hypergraph $\mathcal{H}_4 = (V_4, \mathcal{E}_4)$, where $V_4 = X$ and $\mathcal{E}_4 = \{N_G(v) \cap X : v \in A_X\}$. 
By Observation \ref{obs:3},  the maximum degree of $\mathcal{H}_4$ is at most $k - 1$. 
Hence by Theorem \ref{thm_pach_tardos}, $\chi_{CF}(\mathcal{H}_4) \le k$. This coloring addresses the requirements of the vertices in $A_X$. \review{We define another hypergraph $\mathcal{H}_5$ below to address the requirements of the vertices in $A_{\overline{X}}$.} 

\item \review{Suppose $C \neq \emptyset$. Then, we construct a hypergraph $\mathcal{H}_5 = (V_5, \mathcal{E}_5)$, where $V_5 = C$ and $\mathcal{E}_5 = \{N_G(v) \cap C : v \in A_{\overline{X}}\}$. From Observations \ref{obv:degree_in_B} and \ref{obv:deg_A_X_bar}, it follows that $N_G(v) \cap C \neq \emptyset$, for every $v \in C$. By Observation \ref{obs:3}, we know that the maximum degree of $\mathcal{H}_5$ is at most $k - 1$.} 
Therefore, by Theorem \ref{thm_pach_tardos}, $\chi_{CF}(\mathcal{H}_5) \le k$. 

\review{Suppose $C = \emptyset$. Then, we claim that $A_{\overline{X}} = \emptyset$. Assume, for the sake of contradiction, that $v \in A_{\overline{X}}$. Then, by Observation \ref{obv:deg_A_X_bar}, $|N_G(v)| > 12k\ln \Delta$ and $N_G(v) \subseteq B$. By Observation \ref{obv:degree_in_B}, $|N_G(v)| \leq 12(k-1)\ln \Delta$ which is a contradiction. Hence our assumption that $A_{\overline{X}}$ is non-empty is false.}

This addresses the needs of the vertices in $A_{\overline{X}}$.
\end{itemize}
 
Note that we have addressed the needs of all the vertices in $G$. Also, each vertex is colored at most once in the above. There may be vertices that are left uncolored because
they did not feature in any of the hypergraphs. We can assign all these vertices a 
new color, obtaining a conflict-free coloring of $G$ that uses at most \review{$57 k \ln \Delta + 2k + 3$ colors.} 
\end{proof}

The following result gives an upper bound for CFCN chromatic number of $K_{1, k}$-free graphs.

 \begin{theorem}
\label{thm_claw_n}
Let $G$ be a $K_{1,k}$-free graph with $n$ vertices. Then, $\chi_{CN}(G) = O(\ln k \ln n)$.  
\end{theorem}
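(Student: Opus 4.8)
The plan is to conflict-free colour the closed-neighbourhood hypergraph $\mathcal N=(V(G),\{N_G[v]:v\in V(G)\})$ and to harvest the two logarithmic factors from separate sources: a factor $O(\ln n)$ from splitting the vertices into $O(\ln n)$ \emph{degree scales}, and a factor $O(\ln k)$ from feeding the claw-freeness of $G$ into the near-uniform Lemma \ref{lem_near_uniform_hypergraph}. As in the proof of Theorem \ref{thm_claw_max_degree}, the scales will be coloured with pairwise disjoint palettes, so that a colour which is unique inside $N_G[v]$ within one palette stays unique inside $N_G[v]$ in the combined colouring; this lets me treat the scales independently and simply add up the colour counts.

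Concretely, I would set $L=\lceil\log_2 n\rceil$ and, for $0\le i\le L$, put $D_i=\{v: 2^i\le d_G(v)+1<2^{i+1}\}$. For a fixed scale $i$ I would sample a set $S_i\subseteq V(G)$ by keeping each vertex independently with probability $p_i=\Theta(r/2^i)$, where $r=\Theta(\ln k)$ is a target edge size. By the Chernoff bound (Theorem \ref{thm_Chernoff}) every $v\in D_i$ then satisfies $|N_G[v]\cap S_i|\in[r,\,\ell r]$ for a constant $\ell$, with failure probability small enough to be absorbed by the Local Lemma (Lemma \ref{lem:local}). This turns the trimmed closed neighbourhoods $\{N_G[v]\cap S_i:v\in D_i\}$ into a near-uniform hypergraph to which Lemma \ref{lem_near_uniform_hypergraph} applies, and --- provided its intersection parameter $\Gamma$ is only $\mathrm{poly}(k)$ --- it yields a conflict-free colouring of that scale with $O(r)=O(\ln k)$ colours. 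Summed over the $O(\ln n)$ scales this gives the claimed $O(\ln k\ln n)$.

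The place where claw-freeness has to enter is exactly the bound $\Gamma=\mathrm{poly}(k)$. Two trimmed edges $N_G[u]\cap S_i$ and $N_G[v]\cap S_i$ meet only when $u$ and $v$ share a sampled vertex, so $\Gamma$ is controlled by $|N_G[w]\cap(\text{centres})|$ for $w\in S_i$. When the centres form an independent set $A$, the $K_{1,k}$-freeness of $G$ forces $|N_G(w)\cap A|\le k-1$ (an independent set inside $N_G(w)$), whence each $w$ lies in at most $k$ edges and $\Gamma\le \ell r\cdot k=\mathrm{poly}(k)$; this is the analogue of Observation \ref{obs:3} and is what replaces the crude $\Gamma\le\Delta^2$ used for $\mathcal H_1$ in Theorem \ref{thm_claw_max_degree}. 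I would therefore take $A$ to be a maximal independent set of $G$ and run the above machinery with $A$ as the centre set to serve every vertex of $A$; vertices whose closed neighbourhood is already small (say of size $O(\ln k)$) can be peeled off first and coloured directly through Theorem \ref{thm_pach_tardos} applied to the bounded-degree sub-hypergraph they induce.

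The main obstacle is the tension between these two requirements: restricting the centres to an independent set is precisely what lets claw-freeness pin $\Gamma$ at $\mathrm{poly}(k)$ rather than $\mathrm{poly}(\Delta)$, but a single independent set only serves its own vertices, while a naive decomposition of $V(G)$ into independent centre classes would reintroduce a factor of $\chi(G)$, which is unbounded for claw-free graphs. The crux of the proof is thus to serve the vertices \emph{outside} $A$ without paying this price. Since $A$ is dominating, every $v\notin A$ has at least one and at most $k-1$ neighbours in $A$, and I would exploit this together with the self-domination inherent to \emph{closed} neighbourhoods (each $v$ lies in its own $N_G[v]$) to route the requirement of $v$ through a bounded, claw-controlled local structure, so that the conflict parameter for the remaining vertices is again $\mathrm{poly}(k)$. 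Making this routing precise --- i.e.\ showing the complement of $A$ can be handled within the same $O(\ln k)$-per-scale budget --- is the step I expect to be the hardest, and is where the closed-neighbourhood hypothesis (as opposed to $\chi_{ON}$, for which only the weaker $O(k\ln\Delta)$ bound is claimed) is genuinely needed.
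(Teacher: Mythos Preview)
Your approach differs substantially from the paper's, and the concentration step you pass over has a real problem. You want $|N_G[v]\cap S_i|\in[r,\ell r]$ for every $v\in D_i$ via Chernoff plus the Local Lemma, with $r=\Theta(\ln k)$. But Chernoff only gives failure probability $\exp(-\Theta(r))=k^{-\Theta(1)}$, while the dependency degree in the Local Lemma is at least of order $2^i$: the event for $v$ depends on the samples inside $N_G[v]$, which has size $\approx 2^i$, and even after restricting centres to an independent set $A$ you only bound the number of centres through each $w\in N_G[v]$ by $k$, giving dependency $\Theta(2^ik)$, not $\mathrm{poly}(k)$. For scales $i\gg\ln k$ the Local Lemma condition $4\cdot k^{-\Theta(1)}\cdot 2^ik\le 1$ fails; inflating $r$ to $\Theta(i+\ln k)$ to compensate makes the total $\sum_i(i+\ln k)=\Theta(\ln^2 n)$, which is just the generic bound. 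A union bound runs into the same wall, since it would force $r=\Omega(\ln n)$. This is on top of the gap you already flag as unresolved --- serving $V\setminus A$ within the same budget.

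The paper avoids concentration entirely. It iterates: take a maximal independent set $S$ of the current graph, choose $i$ uniformly from $\{0,\dots,\lfloor\log_2 k\rfloor\}$, subsample $I\subseteq S$ at rate $2^{-i}$, and colour $I$ with one fresh colour. Because $G$ is $K_{1,k}$-free and $S$ is independent, \emph{every} $w\notin S$ has $1\le|N(w)\cap S|\le k-1$; with probability $1/(\lfloor\log_2 k\rfloor+1)$ the random $i$ equals $\lfloor\log_2|N(w)\cap S|\rfloor$, and conditioned on that, $|N(w)\cap I|=1$ with constant probability. Vertices of $S$ are served whenever they land in $I$, which also happens with probability $\ge 1/(\lfloor\log_2 k\rfloor+1)$. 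So an $\Omega(1/\log k)$ fraction of the remaining vertices is served per round in expectation, and $O(\log k\log n)$ rounds --- hence colours --- suffice. The $\log k$ factor comes from the \emph{range of scales} inside an independent set, not from any tail bound; that is precisely the mechanism your scheme is missing.
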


\begin{proof}
We first give a brief overview of the proof.
We use an approach similar to the one used by Pach and Tardos  for the proof of Theorem 1.6 in \cite{pach2009conflict}. 
Using a probabilistic approach, we show the existence of a subset $I_1^*$ of a maximal independent set. By coloring all the vertices of $I_1^*$ with the color 1, we can ensure that a $c/\log_2 k$ fraction of the vertices of the graph see a uniquely colored neighbor.  
We can repeat this process $O(\ln k \ln n)$ times to ensure that all the vertices of $G$ see a uniquely colored neighbor. 


We now describe in detail how we pick the random independent sets.
Let $G_1 = G$. Let $S_1$ be a maximal independent set in $G_1$. Pick an integer $i$ uniformly at random from the set $\{0, 1, \dots, \lfloor \log_2 k \rfloor\}$. Select $I_1 \subseteq S_1$ by picking $v$ into $I_1$ with probability $2^{-i}$ independently, for each vertex $v \in S_1$. 
Note that the integer $i$ can be equal to 0 with probability $\frac{1}{\lfloor \log_2 k \rfloor + 1}$. In that case, every $v \in S_1$ is chosen into $I_1$ with probability 1.  
\begin{equation}\label{eqn:1}
   \forall v \in S_1 , Pr[\text{$v$ is chosen into $I_1$}]  \ge  \frac{1}{\lfloor \log_2 k \rfloor + 1}\;. 
\end{equation}
Color every vertex in $I_1$ with color 1. Let $A_1 = \{v \in V(G_1) \setminus S_1 : |N_{G_1}(v) \cap I_1| = 1\}$. For a vertex $w \in V(G_1) \setminus S_1$, we define $d_w : = |N_{G_1}(w) \cap S_1|$. Note that for any $w \in V(G_1) \setminus S_1$, we have $k-1 \geq d_w \ge 1$ as (1) $G$ is $K_{1, k}$-free, and (2) $S_1$ is a maximal independent set in $G_1$. For a vertex $w \in V(G_1) \setminus S_1$, what is the probability that $w \in A_1$ $(\text{or} \hspace{0.1cm} |N_{G_1}(w) \cap I_1| = 1)$?
Let $A_{1}^w$ denote the event that $w \in A_1$. 
Let $p = \frac{1}{2^{\lfloor \log_2 d_w \rfloor}}$. 
Below, we estimate the probability of the event $A_{1}^w$.

\begin{eqnarray*}
    Pr[A_{1}^w] & = & \sum_{x=0}^{\lfloor \log_2 k \rfloor} Pr[i = x] \cdot Pr[A_{1}^w | i = x] \\
    & \ge & Pr[i = \lfloor \log_2 d_w \rfloor] \cdot Pr[A_{1}^w | i = \lfloor \log_2 d_w \rfloor]  \\
    & = & \frac{1}{\lfloor \log_2 k \rfloor + 1} \Bigg(d_w \cdot p (1 - p)^{d_w - 1}\Bigg) \;.
\end{eqnarray*}
We analyze the above expression for different values of $d_w$. When $d_w = 1$, we can check that $Pr[A_{1}^w | i = \lfloor \log_2 d_w \rfloor] = 1$. When $2 \leq d_w \leq 4$, we can verify using direct calculations that $d_w \cdot p (1 - p)^{d_w - 1} > 0.02$. For the remaining values of $d_w$, 
we have that 
$$d_w \cdot p (1 - p)^{d_w - 1} \geq \bigg(1 - \frac{2}{d_w}\bigg)^{d_w - 1} \geq \bigg(1 - \frac{2}{d_w}\bigg)^{d_w} \geq e^{-2} \bigg(1 - \frac{4}{d_w}\bigg) \geq c,$$
where $c = 0.02$. The first inequality holds since $\frac{1}{d_w} \leq p \leq \frac{2}{d_w}$, and the third inequality holds since $(1 + {x}/{n})^n \ge e^x (1 - {x^2}/{n})$ for $n \ge 1$, $|x| \le n$. The last inequality holds when $d_w \geq 5$.
Thus we get the following:
\begin{equation}\label{eqn:2}
    \forall w \in V(G_1) \backslash S_1 , Pr[\text{$w$ is chosen into $A_1$}]  \ge  \frac{c}{\lfloor \log_2 k \rfloor + 1} \;. 
\end{equation}
From equations (\ref{eqn:1}) and (\ref{eqn:2}), we have the expected cardinality of $I_1 \cup A_1$ is at least $\frac{c \cdot |V(G_1)|}{\lfloor \log_2 k \rfloor + 1}$.
By the probabilistic method, this implies the existence of $I_{1}^*$ 
such that at least $\frac{c \cdot |V(G_1)|}{\lfloor \log_2 k \rfloor + 1}$
vertices of $G_1$ see a uniquely colored neighbor. Color all the vertices of $I_{1}^*$ with color 1.

Let $G_2$ be the subgraph of $G_1$ induced on the vertices that do not have a uniquely 
colored neighbor. We can repeat the same construction and argument for $G_2$, ensuring that $\frac{c \cdot |V(G_2)|}{\lfloor \log_2 k \rfloor + 1}$
vertices of $G_2$ see a uniquely colored neighbor. 

After $r$ such rounds, there will be at most $n \big(1- \frac{c}{\log_2 k}\big)^r$ many vertices of the graph that do not have a uniquely colored neighbor. By setting $r > (\ln n \log_2 k)/c$, we get that there are $< 1$ vertices that do not have a 
uniquely colored neighbor. 
Since we use one new color per round, we need $(\ln n \log_2 k)/c = O(\ln k \ln n)$ colors. 
\end{proof}

\section{Graphs with high minimum degree}
\label{subsec:max_degree}
We first prove the following lemma, which will be used in the proof of Theorem \ref{thm:max_degree}.

\begin{lemma}
\label{lem:restricting_degree}
Let $\Delta$ denote the maximum degree of a graph $G$. It is given that every vertex in $G$ has degree at least $\frac{c\Delta}{\ln ^\epsilon \Delta}$ for some $\epsilon \geq 0$ and $c$ is a constant. Then, there exists $A \subseteq V(G)$ such that for every vertex $v \in V(G)$, 
$$108\ln(2\Delta) < |N_G(v) \cap A| < \frac{180}{c}\ln^{1+\epsilon}(2\Delta). $$
\end{lemma}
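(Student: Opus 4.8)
The plan is to construct $A$ by a single random experiment and then certify, via the Local Lemma, that the resulting set works for every vertex simultaneously. Concretely, I would include each vertex of $V(G)$ in $A$ independently with probability
$$p = \frac{144\,\ln(2\Delta)\,\ln^{\epsilon}\Delta}{c\,\Delta},$$
which is at most $1$ once $\Delta$ is large enough (since $\ln^{1+\epsilon}\Delta = o(\Delta)$ for fixed $\epsilon$). For a vertex $v$, write $X_v = |N_G(v)\cap A|$, a sum of independent Bernoulli$(p)$ trials, so that $\mu_v := \mathbb{E}[X_v] = p\,d_G(v)$. The degree hypothesis $\frac{c\Delta}{\ln^{\epsilon}\Delta}\le d_G(v)\le\Delta$ is exactly what pins $\mu_v$ into the window $144\ln(2\Delta)\le\mu_v\le\frac{144}{c}\ln(2\Delta)\ln^{\epsilon}\Delta$. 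The coefficient $144$ and the deviation $1/4$ used below are chosen so that the two ends of this window, contracted/expanded by the factors $(1\mp\tfrac14)$, land precisely on $108\ln(2\Delta)$ and $\frac{180}{c}\ln^{1+\epsilon}(2\Delta)$.

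Next I would set up the concentration. For each $v$ let $\mathcal{A}_v$ be the bad event $\{\,|X_v-\mu_v|\ge \tfrac14\mu_v\,\}$; avoiding $\mathcal{A}_v$ means $(1-\tfrac14)\mu_v < X_v < (1+\tfrac14)\mu_v$, which together with the window bounds on $\mu_v$ gives exactly $108\ln(2\Delta) < X_v < \frac{180}{c}\ln^{1+\epsilon}(2\Delta)$ (using $\ln^{\epsilon}\Delta\le\ln^{\epsilon}(2\Delta)$ to absorb the upper end into $\ln^{1+\epsilon}(2\Delta)$). Applying the Chernoff bound (Theorem \ref{thm_Chernoff}) with $\delta = 1/4$ and $\mu_v\ge 144\ln(2\Delta)$ yields
$$\Pr[\mathcal{A}_v]\ \le\ 2e^{-\mu_v/48}\ \le\ 2e^{-3\ln(2\Delta)}\ =\ 2(2\Delta)^{-3}.$$

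Finally I would avoid a naive union bound (which fails here, since $n$ can be arbitrarily large relative to $\Delta$) by invoking the Local Lemma (Lemma \ref{lem:local}). The event $\mathcal{A}_v$ depends only on the inclusion indicators of the vertices in $N_G(v)$, so $\mathcal{A}_v$ and $\mathcal{A}_w$ can be dependent only when $N_G(v)\cap N_G(w)\ne\emptyset$; the number of such $w$ is at most $\sum_{u\in N_G(v)}d_G(u)\le\Delta^2$. With failure probability $P := 2(2\Delta)^{-3}$ and dependency degree $d := \Delta^2$, the hypothesis $4Pd = 8\Delta^2(2\Delta)^{-3} = 1/\Delta\le 1$ holds, so $\Pr[\bigcap_v\overline{\mathcal{A}_v}]>0$ and a good set $A$ exists.

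The delicate point — and the reason the constants are forced — is the three-way tension in choosing $\delta$ and the coefficient $\beta$ of $\ln(2\Delta)$ in $\mu_v$: the lower window requires $(1-\delta)\beta\ge108$, the upper window requires $(1+\delta)\beta\le180$, and defeating the $\Delta^2$ dependency in the Local Lemma requires the Chernoff exponent $\beta\delta^2/3$ to exceed $2$. These three inequalities are simultaneously satisfiable only around $\delta=\tfrac14,\ \beta=144$, which is precisely the choice above; a softer deviation such as $\delta=\tfrac15$ still meets the two window constraints but leaves $4Pd$ growing like $\Delta^{0.2}$, so the Local Lemma would not close. The only remaining caveat is to restrict to $\Delta$ large enough that $p\le1$, which is harmless for this asymptotic statement.
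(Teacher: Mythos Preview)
Your proof is correct and matches the paper's argument essentially step for step: random inclusion with the same probability (the paper writes it as $\frac{144\ln^{1+\epsilon}(2\Delta)}{c\Delta}$, which differs from yours only by the harmless replacement of $\ln^{\epsilon}\Delta$ by $\ln^{\epsilon}(2\Delta)$), Chernoff with $\delta=1/4$ giving $\Pr[\mathcal{A}_v]\le 2(2\Delta)^{-3}$, and the Local Lemma with dependency degree $\Delta^2$. Your closing discussion of why the constants are ``forced'' is a nice addition, though it slightly overstates the tightness --- since $4Pd=1/\Delta$ rather than $\Theta(1)$, there is in fact a little room to vary $\beta$ and $\delta$.
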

\begin{proof}
We construct a random subset $A$ of $V(G)$ as described below. Each $v \in V(G)$ is independently chosen into $A$ with probability
$\frac{144\ln^{1+\epsilon}(2\Delta)}{c\Delta}$. 
For a vertex $v \in V(G)$, let $X_v$ be a random variable that  denotes $|N_G(v) \cap A|$. Then, $\mu_v := E[X_v] = \frac{144\ln^{1+\epsilon}(2\Delta)}{c\Delta}d_G(v) \geq 144\ln  (2\Delta)$. 
Since $d_G(v) \leq \Delta$, we also have $\mu_v \leq \frac{144\ln^{1+\epsilon}(2 \Delta)}{c}$. Let $B_v$ denote the event that $|X_v - \mu_v| \geq \frac{\mu_v}{4}$. Applying Theorem \ref{thm_Chernoff} with $\delta = 1/4$, we get $Pr[B_v] = Pr[|X_v - \mu_v| \geq \frac{\mu_v}{4}] \leq 2e^{-\frac{\mu_v}{48}} \leq 2e^{-\frac{144\ln (2\Delta)}{48}} = 
\frac{2}{(2\Delta)^{3}}$.   The event $B_v$ is mutually independent of all but those events $B_u$ where $N_G(u) \cap N_G(v) \neq \emptyset$. Hence, every event $B_v$ is mutually independent of all but at most $\Delta^2$ other events. 
Applying Lemma \ref{lem:local} with $p = Pr[B_v] \leq \frac{2}{(2\Delta)^{3}}$ and $d = \Delta^2$, we have $4\cdot \frac{2}{(2\Delta)^{3}} \cdot \Delta^2 \leq 1$. Thus, there is a non-zero probability that none of the events $B_v$ occur. 
In other words, for every $v$, it is possible to have $ \frac{3}{4}\mu_v < X_v < \frac{5}{4}\mu_v$. Using the upper and lower bounds of $\mu_v$ we computed above, we can say that there exists an $A$ such that, for every $v$,    $108\ln(2 \Delta) < |N_G(v) \cap A| < \frac{180}{c}\ln^{1+\epsilon}(2\Delta)$. 
\end{proof}

The following theorem provides improved upper bounds for $\chi_{ON}$ in terms of its maximum degree for graphs $G$ that have high minimum degrees.

\begin{theorem}
\label{thm:max_degree}
Let $G$ be a graph with maximum degree $\Delta$. 
It is given that every vertex in $G$ has a degree at least
$\frac{c\Delta}{\ln^{\epsilon} \Delta}$ for some $\epsilon \geq 0$
and $c$ is a constant. Then, $\chi_{ON}= O(\ln^{1+\epsilon}\Delta)$. 
\end{theorem}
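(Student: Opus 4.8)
The plan is to reduce the whole problem to conflict-free coloring a single near-uniform hypergraph, and then invoke Lemma \ref{lem_near_uniform_hypergraph}. First I would apply Lemma \ref{lem:restricting_degree} to obtain a set $A \subseteq V(G)$ such that every vertex $v$ satisfies $108\ln(2\Delta) < |N_G(v) \cap A| < \frac{180}{c}\ln^{1+\epsilon}(2\Delta)$. The purpose of this step is that, although the open neighborhoods $N_G(v)$ in $G$ can have widely differing sizes (from $\frac{c\Delta}{\ln^{\epsilon}\Delta}$ up to $\Delta$), their restrictions to $A$ all lie in a narrow band whose two ends differ by a factor of only $O(\ln^{\epsilon}\Delta)$. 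This is exactly the ``near uniform'' regime handled by Lemma \ref{lem_near_uniform_hypergraph}.

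Next I would build the hypergraph $\mathcal{H} = (A, \mathcal{E})$ with $\mathcal{E} = \{\, N_G(v) \cap A : v \in V(G) \,\}$. A conflict-free coloring of $\mathcal{H}$ colors the vertices of $A$ so that, for each $v$, some color appears exactly once in $N_G(v) \cap A \subseteq N_G(v)$; assigning one additional new color to all of $V(G) \setminus A$ then produces a CFON coloring of $G$, because the witness vertex in $A$ keeps its uniqueness (the new color is disjoint from the palette used on $A$). Hence the total number of colors is $\chi_{CF}(\mathcal{H}) + 1$, and it remains to bound $\chi_{CF}(\mathcal{H})$.

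To apply Lemma \ref{lem_near_uniform_hypergraph} I would take $r = 108\ln(2\Delta)$ and let $\ell$ be the smallest integer with $\ell \geq \frac{5}{3c}\ln^{\epsilon}(2\Delta)$, so that $r \leq |E| \leq \ell r$ holds for every hyperedge $E$ (since then $\ell r \geq \frac{180}{c}\ln^{1+\epsilon}(2\Delta)$). For the overlap parameter, note that $N_G(v) \cap A \subseteq N_G(v)$, so two hyperedges meet only when their defining vertices share a common neighbor lying in $A$; counting gives $\Gamma \leq \Delta^2$. The only hypothesis needing care is $r \geq 2\log_2(4\Gamma)$: here $2\log_2(4\Delta^2) = 4 + 4\log_2\Delta$, whereas $r = 108\ln(2\Delta)$, and a direct comparison confirms the inequality for all $\Delta \geq 2$. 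Lemma \ref{lem_near_uniform_hypergraph} then yields $\chi_{CF}(\mathcal{H}) \leq e\ell r = O(\ln^{1+\epsilon}\Delta)$, whence $\chi_{ON}(G) = O(\ln^{1+\epsilon}\Delta)$.

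The main obstacle is essentially front-loaded into Lemma \ref{lem:restricting_degree}: its probabilistic construction (via the Chernoff bound together with the Local Lemma) is what transforms an arbitrary high-minimum-degree graph into the uniform-neighborhood setting. Once $A$ is fixed, the remaining work — checking the size band, bounding $\Gamma$, and verifying the $r \geq 2\log_2(4\Gamma)$ threshold — is routine and rests entirely on the fact that $\ln(2\Delta) = \Theta(\ln\Delta)$.
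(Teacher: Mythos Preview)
Your proposal is correct and follows essentially the same approach as the paper: apply Lemma~\ref{lem:restricting_degree} to obtain $A$, form the hypergraph $\mathcal{H}$ of restricted neighborhoods, invoke Lemma~\ref{lem_near_uniform_hypergraph} with $r = 108\ln(2\Delta)$, $\ell \approx \frac{5}{3c}\ln^{\epsilon}(2\Delta)$, and $\Gamma \leq \Delta^2$, then extend by one fresh color on $V(G)\setminus A$. Your write-up is in fact slightly more careful than the paper's in two places --- you take $\ell$ to be an integer and you explicitly verify the threshold $r \geq 2\log_2(4\Gamma)$ --- but the argument is otherwise identical.
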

\begin{proof}
Apply Lemma \ref{lem:restricting_degree} to find an $A \subseteq V(G)$ such that for every $v \in V(G)$, $108\ln(2\Delta) < |N_G(v) \cap A| < \frac{180}{c}\ln^{1+\epsilon}(2\Delta)$. Construct a hypergraph $\mathcal{H} = (A,\mathcal{E})$ where $\mathcal{E} = \{N_G(v) \cap A~:~v \in V(G)\}$. 
Every $E \in \mathcal{E}$ satisfies $2 \log_2 (4 \Delta^2) < 108 \ln (2\Delta) < |E| < \frac{180}{c}\ln^{1+\epsilon}(2\Delta)$. 
Applying Lemma \ref{lem_near_uniform_hypergraph} with \rogers{$r = 108 \ln (2 \Delta)$ and $\ell = \frac{5}{3c}\log^{\epsilon}(2\Delta)$}, we get $\chi_{CF}(\mathcal{H}) \leq \frac{490}{c}\ln^{1 + \epsilon}(2\Delta)$. By assigning an unused color to the vertices 
in $V(G) \setminus A$, we can extend a conflict-free coloring of $\mathcal{H}$ to a CFON coloring for $G$. 
\end{proof}

\section{A lower bound}
\label{sec: 1 - epsilon}
Let $\delta \geq 0$ be an integer. Recall that, $f_{CN}(\delta) = \max\{\chi_{CN}(G) : G\mbox{ has minimum degree } \mbox{equal to }\delta \}$. We claim that $f_{CN}(\delta') \geq f_{CN}(\delta)$, when $\delta' < \delta$. Let $H$ be a graph with minimum degree equal to $\delta$ having $\chi_{CN}(H) = f_{CN}(\delta)$. Let $H'$ be any graph with a minimum degree equal to $\delta'$. Then the graph $H \oplus H'$ obtained by taking the disjoint union of $H$ and $H'$ has a minimum degree equal to $\delta'$ and $\chi_{CN}(H \oplus H') \geq f_{CN}(\delta)$. This proves the claim. As discussed earlier, with $\Delta$ denoting the maximum degree of the graph under consideration and with $c$ denoting any positive constant, we know that $f_{CN}(c\Delta) = \Theta(\ln \Delta)$. In this section, in Theorem \ref{thm:lower_bound}, we show that $f_{CN}(c\Delta^{1 - \epsilon}) = \Omega(\ln^2 \Delta)$, where $0 < \epsilon < 0.003$ is a constant. Combined with the known upper bound $\chi_{CN}(G) = O(\ln ^2 \Delta)$ for any graph $G$, due to \cite{bhyravarapu2021short}, we have $f_{CN}(c\Delta^{1 - \epsilon}) = \Theta(\ln^2 \Delta)$. In order to show that $f_{CN}(c\Delta^{1 - \epsilon}) = \Omega(\ln^2 \Delta)$, we need to show the existence of a graph with minimum degree $\Omega(\Delta^{1 - \epsilon})$ having CFCN chromatic number  $\Omega(\ln ^2 \Delta)$. We use the same random graph model used by Glebov, Szab{\'o}, and Tardos  in  \cite{glebov2014conflict} and show that such a graph exists with positive probability. Our proof is an extension of the proof of Theorem $4$ in \cite{glebov2014conflict} as it builds on the ideas presented there.

Let $A \subseteq V(G)$, for a graph $G$. We define $N^{(1)}_G (A) := \{v \in V(G) \backslash A : |N_G(v) \cap A| = 1\}$ to be the set of vertices outside $A$ that have exactly one neighbor in $A$.

\begin{theorem}
\label{thm:lower_bound}
 There exists a graph $G$ with maximum degree $\Delta $ and minimum degree $\Delta^{1 - \epsilon}$, where $0< \epsilon < 0.003$ is a constant, such that $\chi_{CN}(G) = \Omega(\ln^2 \Delta)$.   
\end{theorem}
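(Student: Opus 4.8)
The plan is to realize $G$ by the probabilistic method, adapting the random graph construction from the proof of Theorem~4 in \cite{glebov2014conflict}. I would build a random graph whose degrees are spread geometrically across the band $[\Delta^{1-\epsilon},\Delta]$: the vertex set is split into $m=\Theta(\epsilon\ln\Delta)=\Theta(\ln\Delta)$ \emph{scales}, where the vertices of scale $j$ are forced to have degree close to $d_j := 2^{j}\Delta^{1-\epsilon}$ (so $d_j$ sweeps $[\Delta^{1-\epsilon},\Delta]$ as $j$ runs from $0$ to $m$, using $2^{m}=\Delta^{\epsilon}$), by placing each potential edge independently with a scale-dependent probability. The goal is to show that, with positive probability, the resulting graph satisfies simultaneously: (i) maximum degree at most $\Delta$, (ii) minimum degree at least $c\Delta^{1-\epsilon}$, and (iii) $\chi_{CN}(G)=\Omega(\ln^2\Delta)$. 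Conclusions (i) and (ii) are routine concentration statements: each degree is a sum of independent indicators, so Theorem~\ref{thm_Chernoff} bounds its deviation, and the Lov\'asz Local Lemma (Lemma~\ref{lem:local}) --- exactly as in Lemma~\ref{lem:restricting_degree} --- enforces both degree bounds at every vertex at once. This also fixes the relation between the number of vertices $n$ and $\Delta$; I would arrange $n=\mathrm{poly}(\Delta)$, so that $\ln n=\Theta(\ln\Delta)$ and hence $\Omega(\ln^2 n)=\Omega(\ln^2\Delta)$.

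The heart of the matter is the lower bound (iii), and this is where I would extend the counting of \cite{glebov2014conflict}. Fix any coloring $\phi$ with $t$ colors and classes $C_1,\dots,C_t$. A vertex $v$ is \emph{satisfied} exactly when some class meets its closed neighborhood once, i.e. when $v\in N^{(1)}_G(C_c)$ for some $c$ (or $v$ is the unique vertex of its own class in $N[v]$). The crucial quantitative point is that a class $C_c$ of size $s$ can satisfy a scale-$j$ vertex $v$ only when $s$ is comparable to $1/p_j$: writing $x=sp_j$ for the expected number of $C_c$-neighbors of $v$, the chance that $v$ sees $C_c$ exactly once is $\approx x e^{-x}$, which is $\Theta(1)$ only for $x=\Theta(1)$ and negligible once $x$ leaves a constant window. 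Since the scales are geometrically separated, a class tuned to one scale is essentially useless for the others, so the $t$ colors split into groups according to the scale they serve. I would then show that covering all $\Theta(n_j)$ vertices of a single scale forces $\Omega(\ln\Delta)$ classes tuned to that scale, because the sets $N^{(1)}_G(C_c)\cap S_j$ overlap like independent constant-fraction subsets, and a coupon-collector/first-moment estimate shows that any family of fewer than $\Omega(\ln\Delta)$ such classes leaves some scale-$j$ vertex unsatisfied --- with failure probability small enough that a union bound, taken not over all $t^{\,n}$ global colorings but over the far smaller number of color patterns the scale-$j$ vertices can actually distinguish, still closes. Summing the $\Omega(\ln\Delta)$ classes needed per scale over the $m=\Theta(\ln\Delta)$ scales yields $t=\Omega(\ln^2\Delta)$.

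I expect the main obstacle to be making this per-scale lower bound rigorous while the color classes are shared globally. A naive first moment over all $t^{\,n}$ colorings is hopelessly lossy --- it only yields $\chi_{CN}=\Omega(\sqrt{\ln\Delta})$ --- so the counting must be organized scale by scale, charging each class to the single scale it serves and controlling the number of colorings that the vertices of a given scale can distinguish. Two nuisances must be absorbed into this scheme: the \emph{self-satisfaction} escape, in which $v$ is satisfied by its own color because it has no same-colored neighbor (this is why color classes must be kept from being too sparse on each scale, and is the reason the construction cannot be bipartite), and the independence bookkeeping for the events $\{v\in N^{(1)}_G(C_c)\}$, which are governed by each vertex's own incident edges and are therefore nearly independent across the vertices of a fixed scale. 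Finally, the admissible range $0<\epsilon<0.003$ is numerical rather than conceptual: it is what survives after simultaneously demanding enough scales ($m=\Theta(\epsilon\ln\Delta)$), enough vertices per scale for the per-scale union bound to close, a large enough geometric gap between consecutive scales so that each class serves essentially one scale, and the degree bounds (i)--(ii); pushing these constraints through the constants in the Chernoff and first-moment estimates produces the stated threshold, which I do not expect to be tight. Combined with the upper bound $\chi_{CN}(G)=O(\ln^2\Delta)$ of \cite{bhyravarapu2021short}, this gives $f_{CN}(c\Delta^{1-\epsilon})=\Theta(\ln^2\Delta)$.
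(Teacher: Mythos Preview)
Your construction is essentially the paper's: a random graph on $n$ vertices partitioned into $\Theta(\ln n)$ geometric layers with layer-dependent edge probabilities (the paper writes $w_x=(1-\epsilon_0)^i$ for $x\in L_i$, $\epsilon_0=\epsilon/3$, and puts the edge $xy$ with probability $w_xw_y$), and your degree analysis via Chernoff plus union bound matches the paper's Lemma~\ref{lem:lb4}.

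Where you diverge is in the organization of the lower bound (iii). You propose a \emph{scale-by-scale charging} argument: each color class is tuned to essentially one scale, so one needs $\Omega(\ln\Delta)$ classes per scale, and summing over $\Theta(\ln\Delta)$ scales yields $\Omega(\ln^2\Delta)$. The paper does \emph{not} argue per scale. Instead it classifies color classes by total weight into \emph{heavy} ($w(S)>\sqrt n$) and \emph{light}, and bounds globally: independence number $\le n^{0.003}$ handles self-satisfaction (Lemma~\ref{lem:lb1}); each heavy class satisfies at most $n^{0.6}$ outside vertices (Lemma~\ref{lem:lb2}); and \emph{all} $r=\lfloor\epsilon_0^3\ln^2 n\rfloor$ light classes combined satisfy at most $n-n^{0.7}$ vertices (Lemma~\ref{lem:lb3}). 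The rigorous form of your heuristic ``a class tuned to one scale is useless for the others'' is the integral estimate inside Lemma~\ref{lem:lb3}: for a fixed light $S_i$, with $z_j=(1-\epsilon_0)^j w(S_i)$, one has $\sum_j z_j e^{-z_j+1}=O(1/\epsilon_0)$, which bounds $\sum_{x\in V}p(x,S_i)$ by $O(n/(\epsilon_0\ln n))$ and hence forces $\sum_i p(x,S_i)\le\epsilon_0\ln n$ for at least half the vertices. This global summation sidesteps the charging difficulty you correctly flag as the main obstacle, and it makes the union bound clean: one unions over at most $\binom{n}{n^{0.5+2\epsilon_0}}^r\binom{n}{n^{0.7}}$ choices of $(S_1,\dots,S_r,B)$, rather than over per-scale color patterns. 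Your route is plausible and captures the same phenomenon, but the paper's heavy/light dichotomy is what actually closes the argument without the scale-assignment bookkeeping.
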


Let $\epsilon_0 = \frac{\epsilon}{3}$.
Below, we describe the construction of a random graph $G$ on $n$ vertices. 
We use $V$ to denote $V(G)$. 
For the sake of simplicity, we assume that $\lfloor \ln n \rfloor$ divides $n$.
We partition the vertex set $V$ into parts $L_1,\dots, L_{\lfloor \ln n \rfloor}$ of size $\frac{n}{\lfloor \ln n \rfloor}$ each. We define the weight of a vertex $x \in L_i$ to be  $w_x = (1 - \epsilon_0)^i$.
For any $x \in L_i$, $y \in L_j$, we put an edge between $x$ and $y$ with probability $ w_x w_y =  (1 - \epsilon_0)^{i + j}$.
We define the weight of a set $S \subseteq V$ as, $$ w(S) = \sum_{v \in S}w_v.$$

Let $f: V(G) \longrightarrow [\epsilon_0^3 \ln^2 \Delta]$ be a coloring (not necessarily proper) of the vertices of $G$. We say that a vertex $x$ is \emph{taken care of} by a vertex $w$ under the coloring $f$ if
\begin{enumerate}
   \item $w \in N_G[x]$, and
    \item $f(w)$ is distinct from $f(y)$, for every $y \in N_G[x] \setminus \{w\}$. When $w = x$, we say that a vertex $x$ is \emph{taken care of by itself} under $f$.
\end{enumerate}

For each color class of the above coloring, the vertices that are taken care of by themselves form an independent set. The below lemma provides a bound on such vertices.

\begin{lemma}\label{lem:lb1}
    For the graph $G$ constructed, the independence number $\alpha(G) \leq n^{0.003}$
    asymptotically almost surely.
\end{lemma}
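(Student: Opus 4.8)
The plan is to bound the independence number by a first-moment (union bound) argument over vertex subsets of size $t := \lceil n^{0.003}\rceil$, exploiting that the edge probabilities are governed by the vertex weights. Let $Z_t$ denote the number of independent sets of size $t$ in $G$. Since the edges appear independently, for a fixed $S$ with $|S| = t$ we have
\[
Pr[S \text{ is independent}] \;=\; \prod_{\{x,y\}\subseteq S}(1 - w_x w_y) \;\le\; \exp\!\Big(-\!\!\sum_{\{x,y\}\subseteq S} w_x w_y\Big).
\]
I would then rewrite the exponent in terms of the total weight $w(S) = \sum_{v\in S} w_v$. Using $\sum_{\{x,y\}\subseteq S} w_x w_y = \tfrac12\big(w(S)^2 - \sum_{v\in S} w_v^2\big)$ together with $w_v \le 1$, which gives $\sum_{v \in S} w_v^2 \le w(S)$, one obtains $\sum_{\{x,y\}\subseteq S} w_x w_y \ge \tfrac12\big(w(S)^2 - w(S)\big)$.

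The key step is to make this bound uniform over all $S$ by minimizing $w(S)$. The smallest vertex weight is attained on $L_{\lfloor \ln n\rfloor}$ and equals $(1-\epsilon_0)^{\lfloor \ln n\rfloor} \ge n^{-\beta}$, where $\beta := -\ln(1-\epsilon_0)$; hence $w(S) \ge t\, n^{-\beta}$ for every $S$ of size $t$ (regardless of which level the vertices come from). Because $t\,n^{-\beta} \to \infty$ and the map $w \mapsto \tfrac12(w^2 - w)$ is increasing for $w > 1/2$, this yields the uniform estimate $Pr[S \text{ independent}] \le \exp\!\big(-\tfrac12(t^2 n^{-2\beta} - t\, n^{-\beta})\big)$. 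Summing over all $\binom{n}{t} \le n^t$ choices of $S$ and taking logarithms, I would arrive at
\[
\ln E[Z_t] \;\le\; t\ln n - \tfrac12 t^2 n^{-2\beta} + \tfrac12 t\, n^{-\beta}.
\]

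It remains to verify that the negative quadratic term dominates, which is exactly where the numerical constants are pinned down and is the main (though elementary) obstacle. With $\epsilon_0 = \epsilon/3 < 0.001$ we have $\beta = -\ln(1-\epsilon_0) < 0.0015$, so $t^2 n^{-2\beta} = \Theta(n^{\,0.006 - 2\beta})$ with exponent $0.006 - 2\beta > 0.003$, whereas $t\ln n = \Theta(n^{0.003}\ln n)$. Since a polynomial of strictly larger exponent beats $n^{0.003}\ln n$, the right-hand side tends to $-\infty$, so $E[Z_t] \to 0$. By Markov's inequality, $Pr[Z_t \ge 1] \to 0$; that is, asymptotically almost surely $G$ has no independent set of size $t = \lceil n^{0.003}\rceil$, giving $\alpha(G) < n^{0.003}$ a.a.s. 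All the care is in tracking the constant $\beta$ against the exponents $0.003$ and $\epsilon < 0.003$; the probabilistic content is just the union bound, and neither the Local Lemma nor the Chernoff bound is needed here.
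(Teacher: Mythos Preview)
Your proposal is correct. The paper takes a closely related but slightly different route: instead of a direct first-moment count, it observes that every edge of $G$ appears with probability at least $p := (1-\epsilon_0)^{2\ln n} = n^{-2\beta}$, so $G$ stochastically dominates $\mathcal{G}(n,p)$; it then invokes the classical bound $\alpha(\mathcal{G}(n,p)) \le 2\ln(np)/p$ a.a.s.\ from Bollob\'as and checks that $2\ln(np)/p < n^{0.003}$, which boils down to $2\beta = -2\ln(1-\epsilon_0) < 0.003$ for $\epsilon_0 < 0.001$. Your argument essentially unfolds this black box by hand: the uniform lower bound $w(S) \ge t\,n^{-\beta}$ is equivalent to lower-bounding each pairwise edge probability by $n^{-2\beta}$, and your first-moment estimate $E[Z_t] \le n^t \exp\!\big(-\tfrac12 t^2 n^{-2\beta}(1-o(1))\big)$ is exactly the computation behind the cited result. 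The paper's approach is shorter if one is willing to quote the $\mathcal{G}(n,p)$ independence-number bound; yours is self-contained and makes the pivotal numerical inequality $0.006 - 2\beta > 0.003$ explicit.
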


\begin{proof}
    We know that every edge in $G$ is present with probability at least $(1 - \epsilon_0)^{2 \ln n}$. Suppose $Pr[\alpha(G) > n^{0.003}]$ does not tend to 0 as 
    $n$ tends to infinity.
    Then, we can say that for a graph $H \in \mathcal{G}(n, (1 - \epsilon_0)^{2 \ln n})$, $Pr[\alpha(H) > n^{0.003}]$ too does not tend to 0 as $n \longrightarrow \infty$. 
    Here $\mathcal{G}(n, (1 - \epsilon_0)^{2 \ln n})$ denotes the Erd\H{o}s-R\'enyi
    graph on $n$ vertices where each edge is chosen with probability $(1 - \epsilon_0)^{2 \ln n}$.
    But this contradicts a known result (see Theorem 11.25 (ii) in Bollobás' book \cite{bollobas2001random}) that the largest independent set in $\mathcal{G}(n, p)$ has size at most $2\frac{\ln (n p)}{p}$  a.a.s. when $ 2.27/n \le p \le 1/2$. This would imply that the following holds a.a.s.:
\begin{eqnarray*}
    \alpha(H) & \le & 2\frac{\ln n}{(1 - \epsilon_0)^{2 \ln n}}  =\frac{2 \ln n}{n^{2 \ln (1 - \epsilon_0)}} = n^{\frac{\ln(2\ln n)}{\ln n} - 2 \ln (1 - \epsilon_0)} = n^{-2 \ln (1 - \epsilon_0) \Big[- \frac{\ln(2 \ln n)}{2 (\ln n)\cdot (\ln(1 - \epsilon_0))} +1\Big]} \\
    & = & n^{-2 \ln (1 - \epsilon_0) \big[1 - o(1)\big]} = n^{2 \ln \Big(\frac{1}{(1 - \epsilon_0)}\Big)\big[1 - o(1)\big]} \le n^{\ln \Big(\frac{1}{(1 - \epsilon_0)^2}\Big)} < n^{0.003}.
    \end{eqnarray*}
In the above, the last inequality follows since $\ln(1/(1-\epsilon_0)^2)$ increases 
when $\epsilon_0$ increases. By assumption, we have $\epsilon_0 < 0.001$, and hence 
$\ln(1/(1- \epsilon_0)^2) < \ln (1/0.999^2) < 0.003$.
\end{proof}
    
Let $S$ be a color class of the coloring $f$. Let $x \in L_i$ and suppose $x \notin S$. We shall use $p(x, S)$ to denote the probability that $x$ is taken care of by some vertex in the color class $S$. We have,

\begin{eqnarray*}
    p(x, S) = Pr[|N_G(x) \cap S| = 1] & = & \sum_{s \in S}Pr[N_G(x) \cap S = \{s\}] \\
    & = & \sum_{s \in S}w_s w_x  \prod_{y \in S \backslash \{s\}} (1- w_y w_x)  \\
    & < & w_x \sum_{s \in S}w_s \exp\Bigg(-\sum_{y \in S
    \backslash \{s\}} w_y w_x \Bigg) \\
    & = & w_x \sum_{s \in S}w_s \exp(- w(S) w_x + w_s w_x) \\
    & \le & w_x w(S) e^{-w_x w(S) + (1 - \epsilon_0)},
\end{eqnarray*}
where the third line follows since $1 - t \leq e^{-t}$. It can be verified that the function $ze^{- z}$ has a unique maximum at $z = 1$. Thus we get the following: 
\begin{equation}\label{eq:pxs}
p(x, S) < e^{- \epsilon_0}\;.    
\end{equation}

We say that a set  $S$ is \emph{heavy} if $w(S) > \sqrt{n}$; otherwise we call $S$ a \emph{light} set. Note that  any vertex has weight at least $(1 - \epsilon_0)^{\ln n} = n^{\ln (1 - \epsilon_0)} > n^{-2 \epsilon_0}$  (when $0 < \epsilon_0 < 0.5,$ we have that $ \ln(1 - \epsilon_0) \ge \frac{- \epsilon_0}{1 - \epsilon_0} > - 2\epsilon_0$). 
For any set 
 $S$, we have $w(S) > |S| \cdot n^{-2 \epsilon_0}$. Thus we have 
\begin{equation}\label{eq:lightbound}
    |S| < n^{0.5 + 2\epsilon_0}, \mbox{ when } S \mbox{ is a light set } (w(S) \leq \sqrt n). 
\end{equation}

The below lemma provides a bound on the number of vertices that are taken care of
by a heavy set. 
\begin{lemma}\label{lem:lb2}
For each heavy set $S \subseteq V$ of the graph $G$, asymptotically almost surely 
$|N^{(1)}(S)| < n^{0.6}$. That is, a.a.s., 
at most $n^{0.6}$ vertices that are not in $S$ are 
taken care of by $S$.
\end{lemma}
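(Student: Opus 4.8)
The plan is to fix a single heavy set $S$, prove that $Pr[|N^{(1)}(S)| \ge n^{0.6}]$ is super-exponentially small in $n$, and then take a union bound over all at most $2^n$ candidate subsets $S \subseteq V$. The whole argument hinges on the observation that the generic estimate $p(x,S) < e^{-\epsilon_0}$ from (\ref{eq:pxs}) is far too weak in this regime: for a \emph{heavy} $S$ the quantity $z := w_x w(S)$ is polynomially large in $n$, and since $z \mapsto z e^{-z}$ is decreasing for $z > 1$, this factor forces $p(x,S)$ down to $e^{-n^{\Omega(1)}}$.

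First I would make the per-vertex estimate quantitative. Starting from the displayed bound $p(x,S) \le w_x w(S)\, e^{-w_x w(S) + (1-\epsilon_0)}$ established just before (\ref{eq:pxs}), I use heaviness, $w(S) > \sqrt n$, together with the fact that every vertex has weight at least $n^{-2\epsilon_0}$, to conclude that $z = w_x w(S) > n^{0.5 - 2\epsilon_0} > n^{0.498}$, where the last step uses $\epsilon_0 < 0.001$. Monotonicity of $z e^{-z}$ for $z>1$ then yields a bound $p(x,S) \le q$ that holds uniformly over all $x \in V \setminus S$, with $q = e \cdot n^{0.5} e^{-n^{0.498}}$ (absorbing $e^{1-\epsilon_0} \le e$); crucially $q$ decays faster than $e^{-n^{0.49}}$, hence faster than any inverse polynomial.

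Next I would set up the concentration for a fixed $S$. Writing $|N^{(1)}(S)| = \sum_{x \in V \setminus S} X_x$, where $X_x$ is the indicator that $x$ has exactly one neighbour in $S$, the key structural point is that the $X_x$ are mutually independent: $X_x$ is determined by the edges joining $x$ to $S$, and for distinct $x, x' \notin S$ these edge sets are disjoint, hence independent in the random graph model. Because the target $n^{0.6}$ dwarfs the mean $\sum_x p(x,S) \le nq \to 0$, the two-sided Chernoff bound of Theorem \ref{thm_Chernoff} (valid only for deviations within a constant factor of the mean) does not apply directly; instead I would use the elementary union bound over $t$-subsets, $Pr[|N^{(1)}(S)| \ge t] \le \binom{n}{t} q^t \le (enq/t)^t$, taking $t = n^{0.6}$. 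Since $enq/t \le e^2 n^{0.9} e^{-n^{0.498}} \to 0$, raising to the power $t = n^{0.6}$ gives $Pr[|N^{(1)}(S)| \ge n^{0.6}] \le e^{-\Omega(n^{1.09})}$.

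Finally I would union-bound over the at most $2^n$ heavy sets, obtaining a total failure probability of at most $2^n \cdot e^{-\Omega(n^{1.09})} = e^{\,n \ln 2 - \Omega(n^{1.09})} \to 0$, since the $n^{1.09}$ term dominates $n \ln 2$; this gives the a.a.s.\ conclusion. The main obstacle, and the only place requiring care, is precisely this clash of scales: a crude per-set tail bound can never beat the $2^n$ sets, so the argument works only because heaviness drives $p(x,S)$ all the way down to $e^{-n^{\Omega(1)}}$, which in turn makes the per-set deviation probability small enough to survive the union bound. The delicate bookkeeping is merely to track the exponents so that the exponent of $n$ in the per-set tail stays strictly above $1$ (here $\approx 1.09$), which is what defeats the $2^n$ factor.
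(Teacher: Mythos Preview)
Your proposal is correct and follows essentially the same route as the paper: bound $p(x,S)$ via $w_x w(S) > n^{0.5-2\epsilon_0}$ and the monotonicity of $ze^{-z}$, then union-bound over size-$n^{0.6}$ subsets $A$ (the paper writes this as fixing $A$ and computing $\prod_{x\in A}p(x,S)$, then taking a crude $2^n$ union over $A$) and finally over all $2^n$ candidate sets $S$. The only cosmetic differences are that you make the independence of the $X_x$ explicit and track slightly looser constants ($0.498$ in place of $0.5-2\epsilon_0$, yielding exponent $\approx 1.09$ versus the paper's $1.1-2\epsilon_0$); both suffice to beat the $2^n$ factor.
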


\begin{proof}
Consider a heavy subset $S \subseteq V$. Since $S$ is a heavy set, $w(S) > n^{0.5}$. Now fix a set $A \subseteq V \backslash S$ with  $|A| \ge n^{0.6}$. The probability that all elements $x \in A$ have exactly one neighbor in $S$ is estimated below.

\begin{eqnarray*}
    Pr[N^{(1)}(S) \supseteq A] & = & \prod_{x \in A} p(x, S) \\
    & \le & \prod_{x \in A} w_x w(S) e^{-w_x w(S) + 1}  \\
    & < & \Big(n^{(0.5 - 2\epsilon_0)} e^{(-n^{0.5 - 2\epsilon_0} + 1)}\Big)^{n^{0.6}} \\
    & = & \exp\Big((0.5 - 2\epsilon_0) \ln n - n^{0.5 -2\epsilon_0} + 1\Big)^{n^{0.6}} \\
    & = & \exp\Bigg(- n^{(1.1 - 2\epsilon_0)} \bigg(-\frac{(0.5 - 2\epsilon_0) n^{0.6} \ln n}{n^{(1.1 - 2\epsilon_0)}} + 1 - \frac{n^{0.6}}{n^{(1.1 - 2\epsilon_0)}} \bigg)\Bigg) \\
    & \le & \exp\bigg(-n^{(1.1 - 2\epsilon_0)} \Big(1 - o(1)\Big)\bigg).
\end{eqnarray*}
The inequality in the third line follows from the observation that $w_x w(S) > (1 - \epsilon_0)^{\ln n} \cdot n^{0.5} \ge n^{0.5 - 2\epsilon_0}$ and that $ze^{-z}$ is decreasing in the interval $[1,
\infty)$. Taking union over the possible $2^n \cdot 2^n$ choices for $S$ and $A$, we 
see that the probability of $S$ taking care of $A$ tends to 0.
\end{proof}

The next lemma bounds the number of vertices that can be 
taken care of by light sets. 

\begin{lemma}\label{lem:lb3}
Let $r = \lfloor \epsilon_{0}^3 \ln^2 n \rfloor$. For all pairwise disjoint light sets $S_1, \dots , S_r \subseteq V$, we have asymptotically almost surely $|\bigcup_{i=1}^{r} N^{(1)}(S_i)| < n - n^{0.7}$.    
\end{lemma}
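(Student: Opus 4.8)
The plan is to upper bound the probability that the random graph $G$ admits a \emph{bad} family of pairwise disjoint light sets $S_1,\dots,S_r$ for which more than $n-n^{0.7}$ vertices lie in $\bigcup_{i}N^{(1)}(S_i)$, and to show this probability tends to $0$; the lemma is the complementary a.a.s.\ statement. Fix such a family. By \eqref{eq:lightbound} each $|S_i|<n^{0.5+2\epsilon_0}$, so $|\bigcup_i S_i|\le r\,n^{0.5+2\epsilon_0}=o(n^{0.7})$; hence if more than $n-n^{0.7}$ vertices are taken care of by the $S_i$, then at least $n-2n^{0.7}$ of them lie \emph{outside} $\bigcup_i S_i$. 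It therefore suffices to union bound, over all light families and over all sets $B\subseteq V\setminus\bigcup_i S_i$ with $|B|=n-2n^{0.7}$, the probability that every $x\in B$ is taken care of by some $S_i$. The structural fact that makes this tractable is that for $x\notin\bigcup_i S_i$ the event ``$x\in\bigcup_i N^{(1)}(S_i)$'' depends only on the edges from $x$ to $\bigcup_i S_i$; these edge sets are disjoint for distinct $x\in B$, so the events are mutually independent, and for fixed $x$ the events $\{|N_G(x)\cap S_i|=1\}$ are independent across $i$.

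First I would estimate the per-vertex survival probability. For $x\notin\bigcup_i S_i$ put $q_x:=\prod_{i=1}^r\big(1-p(x,S_i)\big)$ and $P_x:=\sum_i p(x,S_i)$; by the independence above, $q_x=Pr[x\notin\bigcup_iN^{(1)}(S_i)]$. Using $p(x,S_i)<e^{-\epsilon_0}$ from \eqref{eq:pxs} together with $1-t\ge e^{-t/(1-e^{-\epsilon_0})}$ on $[0,e^{-\epsilon_0}]$, we get $q_x\ge e^{-c'P_x}$ with $c'=1/(1-e^{-\epsilon_0})=O(1/\epsilon_0)$. The crux is a bound on $\sum_{x\in V}P_x$ that holds uniformly over every light family. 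Writing $\sum_xP_x=\sum_i\sum_x p(x,S_i)$ and applying the estimate $p(x,S_i)<w_xw(S_i)e^{-w_xw(S_i)+(1-\epsilon_0)}$ to a single set of weight $W_i=w(S_i)$, the level structure (each $L_\ell$ has $n/\lfloor\ln n\rfloor$ vertices of weight $(1-\epsilon_0)^\ell$) turns $\sum_x p(x,S_i)$ into $e^{1-\epsilon_0}\,\tfrac{n}{\lfloor\ln n\rfloor}\sum_\ell z_\ell e^{-z_\ell}$ with $z_\ell=(1-\epsilon_0)^\ell W_i$ a geometric sequence of ratio $(1-\epsilon_0)$. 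Since $ze^{-z}$ peaks at $z=1$ and decays, a dyadic summation gives $\sum_\ell z_\ell e^{-z_\ell}=O(1/\epsilon_0)$ for every $W_i$, whence $\sum_x p(x,S_i)=O\!\big(n/(\epsilon_0\ln n)\big)$ and, with $r=\lfloor\epsilon_0^3\ln^2 n\rfloor$, $\sum_xP_x=O(\epsilon_0^2 n\ln n)$. Thus $\bar P:=\tfrac1n\sum_xP_x=O(\epsilon_0^2\ln n)$, and by convexity of $t\mapsto e^{-c't}$ (Jensen), $\sum_xq_x\ge n\,e^{-c'\bar P}\ge n^{\,1-C\epsilon_0}$ for an absolute constant $C$.

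The union bound then closes as follows. For fixed light $S_1,\dots,S_r$ and any admissible $B$, independence gives $Pr[B\subseteq\bigcup_iN^{(1)}(S_i)]=\prod_{x\in B}(1-q_x)\le \exp\!\big(-\sum_{x\in B}q_x\big)$, and since $|V\setminus B|=2n^{0.7}$ and $q_x\le1$ we have $\sum_{x\in B}q_x\ge\sum_xq_x-2n^{0.7}\ge\tfrac12 n^{1-C\epsilon_0}$ once $1-C\epsilon_0>0.7$. By \eqref{eq:lightbound} the number of light families is at most $(nr)^{\,r n^{0.5+2\epsilon_0}}=\exp\!\big(O(n^{0.5+3\epsilon_0})\big)$, and the number of choices of $B$ is $\binom{n}{2n^{0.7}}=\exp\!\big(O(n^{0.7}\ln n)\big)$. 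Multiplying the factors (the family count being of smaller order) bounds the failure probability by $\exp\!\big(O(n^{0.7}\ln n)-\tfrac12 n^{1-C\epsilon_0}\big)\to0$ whenever $1-C\epsilon_0>0.7$.

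The main obstacle is purely one of constants in this last inequality: the ``survival mass'' $n^{1-C\epsilon_0}$ must strictly dominate the entropy $n^{0.7}\ln n$ of the choices of $B$ (and the $\exp(O(n^{0.5+3\epsilon_0}))$ count of families), i.e.\ one needs $C\epsilon_0<0.3$. Pinning down the absolute constant $C$, which comes from the geometric estimate $\sum_\ell z_\ell e^{-z_\ell}=O(1/\epsilon_0)$ and from $c'=1/(1-e^{-\epsilon_0})$, is exactly what forces a quantitative upper bound on $\epsilon$ (with $\epsilon_0=\epsilon/3$); together with the analogous constant constraints from Lemmas~\ref{lem:lb1} and~\ref{lem:lb2} this yields the stated range $0<\epsilon<0.003$. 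The one subtlety to handle carefully is that the bound on $\sum_xP_x$ be genuinely uniform over all light families: this holds because the per-set estimate uses only $W_i\ge0$ and the fixed level structure, never the internal arrangement of $S_i$.
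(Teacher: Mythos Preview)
Your argument is correct and follows the same architecture as the paper: bound each $p(x,S_i)$ by $w_xw(S_i)e^{-w_xw(S_i)+1}$, use the level structure to show $\sum_{x}p(x,S_i)=O\big(n/(\epsilon_0\ln n)\big)$ uniformly over $S_i$, turn this into a large ``survival mass'' $\sum_x q_x$, and then close with a union bound over $\exp\big(O(n^{0.7}\ln n)\big)$ many choices of light families and of the complement set $B$.

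The one genuine difference is how the survival mass is extracted from the bound $\sum_x P_x=O(\epsilon_0^2 n\ln n)$. The paper proceeds via Markov's inequality: at most $n/2$ vertices can satisfy $P_x>\epsilon_0\ln n$, so there is a set $V'$ with $|V'|\ge n/2$ on which $P_x\le\epsilon_0\ln n$ pointwise, whence each $q_x\ge n^{-f(\epsilon_0)\epsilon_0}$ with the sharp constant $f(\epsilon_0)=e^{\epsilon_0}\ln\!\big(1/(1-e^{-\epsilon_0})\big)\sim\ln(1/\epsilon_0)$, and the survival mass is $\ge\tfrac12 n^{1-f(\epsilon_0)\epsilon_0}\ge\tfrac12 n^{0.99}$. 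You instead apply Jensen to the convex map $t\mapsto e^{-c't}$ and obtain $\sum_x q_x\ge n\,e^{-c'\bar P}$ directly, with the cruder constant $c'=1/(1-e^{-\epsilon_0})\sim 1/\epsilon_0$; this yields $\sum_x q_x\ge n^{1-C\epsilon_0}$ with an absolute $C$ (tracing the constants, $C\le 12$). Your route is a bit cleaner in that it avoids the auxiliary set $V'$; the paper's route yields a sharper exponent because its $f(\epsilon_0)$ beats your $c'$ by a factor $\sim\epsilon_0\ln(1/\epsilon_0)$. Either way, for $\epsilon_0<0.001$ the survival mass dominates the entropy $n^{0.7}\ln n$ of the choices, so both arguments close in the stated range of~$\epsilon$.
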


\begin{proof}
We first fix light subsets $S_1, \dots, S_r$ of $V$. Since each $S_i$ is a light set, $w(S_i) \le \sqrt{n}$. 
We first need the following claim.

\begin{claim}
We have $\sum_{i=1}^{r} p(x, S_i) > \epsilon_{0} \ln n$ for at most half of the vertices $x \in V$.
\end{claim}

\begin{proof}
Assume the contrary. We then have
\begin{equation}\label{eqn:lbcalc1}
\frac{n}{2} \cdot \epsilon_{0} \ln n  \le  \sum_{x \in V} \sum_{i=1}^{r} p(x, S_i) 
     =  \sum_{i=1}^{r} \sum_{x \in V} p(x, S_i)\;. 
\end{equation}
When $S_i$ is fixed, we have seen that $p(x, S_i)\leq  w_x w(S_i) e^{-w_x w(S_i) + (1 - \epsilon_0)}$. When $x \in L_j$,  we get  
$$p(x, S_i)\leq  (1- \epsilon_0)^j w(S_i) e^{-(1- \epsilon_0)^j w(S_i) + 1}  = z_j e^{-z_j  + 1},$$ 
where the first inequality follows by dropping $-\epsilon_0$ from the exponent. We set  $z_j = (1- \epsilon_0)^j w(S_i)$ to get the second equality. Observe that 
\begin{equation}\label{eqn:lbcalc2}
\sum_{x \in V} p(x, S_i)  = \sum_{j = 1}^{\ln n} \sum_{x \in L_j} p(x, S_i)  \leq  \frac{n}{\ln n} \sum_{j=1}^{\ln n} z_j e^{-z_j + 1}  \leq  \frac{n}{\ln n} \sum_{j=1}^{\infty} z_j e^{-z_j + 1}.
\end{equation}
We will now upper bound $\sum_{j=1}^{\infty} z_j e^{-z_j + 1}$ by considering three ranges for $z_j$. Notice that $z_j >0$  when $S_i$ is nonempty.
\begin{itemize}
    \item When $0 < z_j \le 1$, we have $z_j e^{-z_j + 1} \le e z_j$. 
    Therefore, $\sum_{j: z_j \le 1} \big(z_j e^{-z_j + 1}\big) \leq \sum_{j: z_j \le 1} \big(e z_j\big) \le e \big( 1 + (1 - \epsilon_0) + (1 - \epsilon_0)^2 + \cdots\big) = \frac{e}{1 - (1 - \epsilon_0)} = \frac{e}{\epsilon_0}$.
    
    \item When $1 < z_j < 2$, we have $\sum_{j: 1 < z_j < 2} \big(z_j e^{-z_j + 1}\big)$
$ \le  \Big(\frac{e\cdot 2}{e^{2}} + \frac{e\cdot 2 (1 - \epsilon_0)}{e^{2 (1 - \epsilon_0)}} + \cdots +   \frac{e \cdot2 (1 - \epsilon_0)^{d-1}}{e^{2 (1 - \epsilon_0)^{d-1}}} \Big)$, where $d$ is the smallest integer value for which $2 (1 - \epsilon_0)^{d}$ goes below 1. 
It can be verified that each term of the above summation is at most 1. It can also be noted that the number of terms $d \leq \frac{1}{\epsilon_0}$. Thus we get that 
the terms when  $1 < z_j < 2$ sum to at most $\frac{1}{\epsilon_0}$.

    \item When $z_j \geq 2$, we have $\sum_{j: z_j \ge 2} \big(z_j e^{-z_j + 1}\big) \le \frac{1}{\epsilon_0} \int_{1}^{\infty} z e^{-z+1} \,dz = \frac{2}{\epsilon_0}$. The factor of 
    $\frac{1}{\epsilon_0}$ is due to the fact that $z_j - z_{j+1} = z_j - z_j (1 - \epsilon_0) = z_j \epsilon_0 > \epsilon_0$. Thus, the number of $z_j's$ that lie between $a$ and $a +1$, for any integer $a \ge 2$, is at most $\frac{1}{\epsilon_0}$. A straightforward integration by parts gives us 
    that $\int_{1}^{\infty} z e^{-z+1} \,dz= 2$. 
\end{itemize}
Combining equations (\ref{eqn:lbcalc1}), (\ref{eqn:lbcalc2}), and the above bound, gives us the following.

\begin{equation} \label{eqn:lbcalc5}
   \frac{n}{2} \cdot \epsilon_{0} \ln n \leq \sum_{i=1}^{r}  \frac{n}{\ln n} \sum_{j=1}^{\infty} z_j e^{-z_j + 1} \leq r \frac{n}{\ln n} \bigg(  \frac{e}{\epsilon_0} + \frac{1}{\epsilon_0} + \frac{2}{\epsilon_0}  \bigg) 
   \leq \frac{6rn}{\epsilon_0 \ln n}.
\end{equation}
Rearranging terms in inequality (\ref{eqn:lbcalc5}), we get $r \ge \frac{\epsilon_0^2 \ln^2 n}{12}$. This contradicts the fact that $r = \epsilon_0^3 \ln^2 n \le 0.001 \cdot \epsilon_0^2 \ln^2 n = \frac{\epsilon_0^2 \ln^2 n}{1000}$.

This completes the proof of the claim that  $ \sum_{i=1}^{r} p(x, S_i) > \epsilon_{0} \ln n$ for at most half the vertices $x \in V$.
\end{proof}



Let $V' \subseteq V$ be the set of those vertices $x \in V$ for which $ \sum_{i=1}^{r} p(x, S_i) \le \epsilon_0 \ln n$. Then, $|V'| \ge n/2$. Now fix a set $B\subseteq V$ with $|B| = n^{0.7}$. In the below calculation, we bound the probability that all the vertices  $x \in V \setminus B$ have exactly one neighbor in at least one of the $S_i$'s. This probability is given by
\begin{align*}
     \prod_{x \in V \setminus B}\Bigg(1- \prod_{i =1}^r \Big(1 - p(x, S_i) \Big) \Bigg) 
    & \le  \prod_{x \in V' \setminus B}\Bigg(1- \prod_{i =1}^r \Big(1 - p(x, S_i) \Big) \Bigg)  \\
    & \le  \exp\Bigg(- \sum_{x \in V'\setminus B} \prod_{i =1}^r \Big(1 - p(x, S_i) \Big)\Bigg)  \\
    & \le  \exp\Bigg(- \sum_{x \in V'\setminus B} e^{-f(\epsilon_0) \sum_{1 =1}^r p(x, S_i)}\Bigg) \text{, $f(\epsilon_0)$ is defined below.} \\
\end{align*}
The first inequality follows by restricting the scope of vertices, and the second inequality follows by using the fact that $1- x \leq e^{-x}$. For the  third inequality, let us set
$f(\epsilon_0) : = e^{\epsilon_0} \ln \big(\frac{1}{1 - e^{- \epsilon_0}}\big)$. 
From equation (\ref{eq:pxs}), we get that $p(x, S_i) \in [0, e^{-\epsilon_0}]$. When a number $z$ is chosen from the range $[0, 1)$, we can verify that $\big( \ln\big( \frac{1}{1 - z}\big) \big)/z$ is an increasing function on $z$. So $\big( \ln\big( \frac{1}{1 - e^{-\epsilon_0}}\big) \big)/ e^{-\epsilon_0} \geq \big( \ln\big( \frac{1}{1 - p(x,S_i) }\big) \big)/p(x,S_i)$. Rearranging this, it follows that $e^{-f(\epsilon_0) p(x, S_i)} \leq 1- p(x, S_i)$. We continue our computation below.
\begin{align}
    \exp\Bigg(- \sum_{x \in V'\setminus B} e^{-f(\epsilon_0) \sum_{1 =1}^r p(x, S_i)}\Bigg)
    & \le  \exp\Bigg(- \Big(\frac{n}{2} - n^{0.7}\Big) e^{-f(\epsilon_0)\epsilon_{0} \ln n}\Bigg) \nonumber\\
    & \le  \exp\Bigg(- \Big(\frac{n}{2} - n^{0.7}\Big) e^{-0.01 \ln n}\Bigg) 
    \nonumber \\
    & =  \exp\Big(- 0.5 n^{0.99} + n^{0.69}\Big) \nonumber \\
    & \le  \exp\Bigg(-n^{0.99}\Big(\frac{1}{2} - o(1) \Big)\Bigg) \;. \label{eq:n99}
\end{align}
The first inequality follows since for vertices $x$ of $V'$, we have $ \sum_{i=1}^{r} p(x, S_i) \le \epsilon_0 \ln n$. Below we explain  the second inequality. We have $f(\epsilon_0) \cdot \epsilon_0 = \epsilon_0 \cdot e^{\epsilon_0} \ln\big(\frac{1}{1 - e^{- \epsilon_0}}\big)$, where $0 < \epsilon_0 < 0.001$. Using $e^{-x} \le 1 - \frac{x}{2}$, for $x \in [0, 1]$, we get $f(\epsilon_0) \cdot \epsilon_0 \le \epsilon_0 \cdot e^{\epsilon_0} \ln\big(\frac{2}{\epsilon_0}\big)$. 
Since $x \cdot e^x \ln\big(\frac{2}{x}\big)$ is an increasing function when $x \in (0, 1]$ and considering that $\epsilon_0 \in (0, 0.001)$, we have, $f(\epsilon_0) \cdot \epsilon_0 \le 0.001 \cdot e^{0.001} \ln\big(\frac{2}{0.001}\big) < 0.01$.

Finally, since $|B| = n^{0.7}$ and using the size bound on light sets given in equation (\ref{eq:lightbound}), we note that the number of choices for sets $S_1, \dots, S_r$, and $B$ is at most

\begin{align*}
   \binom{n}{n^{0.7}} \binom{n}{n^{0.5 + 2\epsilon_0}}^r & <  n^{n^{0.7}} \Big(n^{n^{0.5 + 2\epsilon_0}} \Big)^r \\
   & =  e^{n^{0.7}\ln n} \Big(e^{(n^{0.5 + 2\epsilon_0})r \ln n} \Big) \\
   & =  e^{n^{0.7}\ln n + n^{(0.5 + 2\epsilon_0)} r\ln n} \\
   & =  e^{O(n^{0.7}\ln n)}.
\end{align*}
From  equation (\ref{eq:n99}) and the above calculations, we can see that any $r$ light sets $S_1, S_2, \ldots, S_r$
can take care of at most $n - n^{0.7}$ vertices a.a.s. 
\end{proof}

The below lemma shows that the graph $G$ constructed has the desired minimum degree with high probability.
\begin{lemma}\label{lem:lb4}
For the graph $G$ constructed, the minimum degree of $G$ is $\Omega(\Delta^{1 -  \epsilon})$ asymptotically almost surely.
\end{lemma}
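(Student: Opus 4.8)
The plan is to show that, a.a.s., every vertex degree is tightly concentrated around its expectation, and then simply to compare the smallest expected degree with the largest one. First I would compute the total weight $w(V) = \frac{n}{\lfloor \ln n\rfloor}\sum_{i=1}^{\lfloor \ln n\rfloor}(1-\epsilon_0)^i$. Since $\epsilon_0$ is a fixed constant, the geometric sum converges to $\frac{1-\epsilon_0}{\epsilon_0}$, so $w(V) = \Theta(n/\ln n)$. The degree of a vertex $x\in L_i$ is a sum of independent Bernoulli trials with expectation $\mu_x = w_x\bigl(w(V)-w_x\bigr)$, which, because every $w_x \le 1 - \epsilon_0 \ll w(V)$, is essentially $w_x\, w(V)$ and is monotone increasing in $w_x$. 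Hence the largest expected degree is $\mu_{\max} = \Theta(w(V)) = \Theta(n/\ln n)$ (attained in $L_1$), while the smallest, $\mu_{\min}$, is attained in $L_{\lfloor \ln n\rfloor}$, where $w_x = (1-\epsilon_0)^{\lfloor \ln n\rfloor} > n^{-2\epsilon_0}$ by the bound $\ln(1-\epsilon_0) > -2\epsilon_0$ used earlier; thus $\mu_{\min} = \Omega(n^{1-2\epsilon_0}/\ln n)$.

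Next I would apply the Chernoff bound (Theorem \ref{thm_Chernoff}) with $\delta = 1/2$ to each vertex, giving $Pr[|\deg(x)-\mu_x|\ge \mu_x/2]\le 2e^{-\mu_x/12}$. Since every $\mu_x\ge \mu_{\min} = \Omega(n^{1-2\epsilon_0}/\ln n)$ is polynomially large in $n$, each tail is $e^{-n^{\Omega(1)}}$, so a union bound over all $n$ vertices still tends to $0$. Therefore a.a.s.\ every vertex simultaneously satisfies $\tfrac12\mu_x \le \deg(x)\le \tfrac32\mu_x$; in particular the maximum degree obeys $\Delta \le \tfrac32\mu_{\max} = O(n/\ln n)$, and the minimum degree is at least $\tfrac12\mu_{\min} = \Omega(n^{1-2\epsilon_0}/\ln n)$.

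Finally I would combine the two bounds. Writing $\delta_{\min}$ for the minimum degree, the above gives $\delta_{\min}/\Delta^{1-\epsilon} = \Omega\!\left(\frac{n^{1-2\epsilon_0}/\ln n}{(n/\ln n)^{1-\epsilon}}\right) = \Omega\!\left(n^{\epsilon-2\epsilon_0}(\ln n)^{-\epsilon}\right)$. Recalling $\epsilon_0 = \epsilon/3$ makes the exponent $\epsilon - 2\epsilon_0 = \epsilon/3 > 0$, so this ratio tends to infinity and in particular $\delta_{\min} = \Omega(\Delta^{1-\epsilon})$ a.a.s., as claimed.

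I expect the only real work to lie in the bookkeeping of the first and last steps: pinning down $w(V)$, $\mu_{\max}$, and $\mu_{\min}$ to the correct polynomial order (in particular using $\ln(1-\epsilon_0) > -2\epsilon_0$ to extract the clean lower bound $w_{\min} > n^{-2\epsilon_0}$), and then verifying that the choice $\epsilon_0 = \epsilon/3$ leaves a positive power of $n$ to dominate the $(\ln n)^{-\epsilon}$ factor. The concentration step itself is essentially free, since the minimum expected degree is polynomially large and the Chernoff–union-bound argument therefore has enormous slack.
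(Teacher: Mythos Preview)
Your proposal is correct and follows essentially the same approach as the paper: compute expected degrees via the weight structure, apply Chernoff plus a union bound to get concentration, and then compare $\mu_{\min} = \Omega(n^{1-2\epsilon_0}/\ln n)$ to $\mu_{\max} = \Theta(n/\ln n)$ using $\epsilon_0 = \epsilon/3$. Your union-bound step is in fact a bit cleaner than the paper's layer-by-layer version, since you simply note that $\mu_{\min}$ is polynomially large and hence every Chernoff tail is $e^{-n^{\Omega(1)}}$, but the underlying argument is the same.
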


\begin{proof}
We first calculate the expected degree of a vertex $x \in L_j$. Let $d_G(x)$ denote the degree of $x$. 
\begin{align}
    \mu(x) : = \mathbb{E}[d_G(x)] & =  \frac{n}{\ln n}\Big[(1 - \epsilon_0)^{j+1} + (1 - \epsilon_0)^{j+2} + \dots + (1 - \epsilon_0)^{j+\ln n} \Big] \nonumber \\
    & \geq  \frac{n}{\ln n} (1 - \epsilon_0)^{j+1} \label{eq:deglb} \\
    & = (1 - \epsilon_0) \cdot \frac{n}{\ln n} \cdot (1 - \epsilon_0)^{\ln (e^j)} \nonumber\\
    & = (1 - \epsilon_0) \cdot \frac{n}{\ln n} \cdot (e^j)^{\ln (1 - \epsilon_0)} \nonumber\\ 
    & \ge (1 - \epsilon_0) \cdot \frac{n}{\ln n} \cdot (e^j)^{- 2\epsilon_0}\;,\label{eq:deglblast}
\end{align}
where we use the fact that $\ln (1 - \epsilon_0) \ge - 2\epsilon_0 $, as noted in the discussion preceding the statement of Lemma \ref{lem:lb2}.
Using the Chernoff bound given in Theorem \ref{thm_Chernoff} for any $0 < \alpha < 1$, we get  
\begin{align*}
     Pr[|d_G(x) - \mu(x)| \ge \alpha \mu(x)] & \le  2e^{\frac{{-\alpha}^2\mu(x)}{3}} \\
    & \le  2e^{\frac{{-\alpha}^2 n}{\ln n} \cdot \frac{(1 - \epsilon_0)^{j + 1}}{3}}\;,
\end{align*}
where the last inequality follows by equation (\ref{eq:deglb}).
For an $x \in L_j$, let $A_{x}^j$ denote the event that $|d_G(x) - \mu(x)| \ge \alpha \mu(x)$. We have shown that $Pr[A_{x}^j] \le 2e^{\frac{{-\alpha}^2 n}{\ln n} \cdot \frac{(1 - \epsilon_0)^{j + 1}}{3}}$. Let $A^j$ denote the event $\bigcup_{x \in L_j}A_{x}^j$. Below we calculate the probability of $A^1$, using the union bound.
\begin{align*} 
    Pr[A^1] & \le  \sum_{x \in L_1} Pr[A_{x}^1]\\
    & \le  \frac{2n}{\ln n} \cdot e^{\frac{{-\alpha}^2 n}{\ln n} \cdot \frac{(1 - \epsilon_0)^{2}}{3}}  \\
    & \le 2n  \cdot \exp\bigg({-\frac{{\alpha}^2 n}{\ln n} \frac{0.999^2}{3}}\bigg) \hspace{3 cm} \text{(since $\epsilon_0 < 0.001$)} \\
    & \leq  \exp \bigg({\ln (2n)} - \frac{ \alpha^2 n }{4\ln n} \bigg) \;. \hspace{2.7 cm} \text{(since $0.999^2/3 > 1/4$)}
\end{align*}
Observe that, $Pr[A^{j +1}] \le e^{1 - \epsilon_0}  Pr[A^j] $, for  $ \forall 1 \le j < \ln n$. So we have the following for $1 \leq j \leq \ln n$.
\begin{equation*}
   Pr[A^j] \leq e^{j-1} \exp \bigg( {\ln (2n) } - \frac{ \alpha^2 n }{4\ln n} \bigg).
\end{equation*}
Let $\overline{A^j}$ denote the complement of the event $A^j$. We have 

\begin{eqnarray*}
    Pr[\overline{A^1} \cap \dots \cap \overline{A^{\ln n}}] & = & 1 - Pr[{A^1} \cup \dots \cup {A^{\ln n}}] \\
    & \ge & 1 - \Bigg[\frac{e^{\ln(2n)}}{e^{\frac{ \alpha^2 n }{4 \ln n}}} 
    + e \cdot \frac{e^{\ln(2n)}}{e^{\frac{ \alpha^2 n}{4 \ln n}}} + \cdots + e^{\ln n -1} \cdot \frac{e^{\ln(2n)}}{e^{\frac{ \alpha^2 n}{4 \ln n}}} \Bigg]\\
    & \geq & 1 - n \cdot  \Bigg[\frac{e^{\ln(2n)}}{e^{\frac{ \alpha^2 n }{4 \ln n}}} \Bigg] \\
    & = & 1 -o(1).
\end{eqnarray*}
We have thus shown that, for every vertex $x$ in $G$, $|d_G(x) - \mu(x)| < \alpha \mu(x)$ a.a.s. The vertex with the maximum degree is from $L_1$ a.a.s. and similarly, the vertex with the minimum degree is from $L_{\ln n}$ a.a.s. Now we calculate an upper bound for $\mu(x)$, for $x \in L_1$. 
\begin{eqnarray*}
    \mu(x) & = & \frac{n}{\ln n}\Big[(1 - \epsilon_0)^{1+1} + (1 - \epsilon_0)^{1+2} + \dots + (1 - \epsilon_0)^{1+\ln n} \Big] \\
    & \le & \frac{n}{\ln n} \frac{(1 - \epsilon_0)^{2}}{\epsilon_0} \;.
\end{eqnarray*}
From the above upper bound and the lower bound (computed in equation (\ref{eq:deglb})) on $\mu(x)$ when $x \in L_1$, we have $\Delta = \Theta\big( \frac{n}{\ln n}\big)$ a.a.s. Now, consider an $x \in L_{\ln n}$. From the lower bound for $\mu(x)$ that we computed at the beginning of this proof in equation (\ref{eq:deglblast}), we have, $\mu(x) \ge (1 - \epsilon_0) \cdot \frac{n}{\ln n} \cdot n^{-2 \epsilon_0}$. Thus, the minimum degree of $G$ is $\Omega\big( \frac{n^{1 - 2 \epsilon_0}}{\ln n}\big)$ a.a.s. That is, the minimum degree of $G$ is $\Omega(\Delta^{1 - \epsilon})$ a.a.s. (as $\epsilon = 3\epsilon_0$).
\end{proof}

To summarize the proof of Theorem \ref{thm:lower_bound}, we note that any coloring $f: V(G) \longrightarrow [r]$ that uses $r = \epsilon_0^3 \ln^2 \Delta$ 
many colors cannot take care of all the vertices of $G$. For a fixed color class, Lemma \ref{lem:lb1} bounds the number of vertices of that color class that can take care of themselves. 
Lemma \ref{lem:lb1} provides a bound of  $n^{0.003}$.
Thus across the $r$ color classes, 
the number of vertices that take care of themselves is at most  $r n^{0.003}$.
The number of vertices that are taken care of by a fixed heavy set is bounded by Lemma \ref{lem:lb2} 
to $n^{0.6}$. Thus the total number of vertices that are taken care of by heavy color classes is at most $r n^{0.6}$. 
Lemma \ref{lem:lb3} bounds the number of vertices taken care of by all the light color classes to $n - n^{0.7}$. Summing up, we note that all the $n$ vertices cannot be taken care of. 

Lemma \ref{lem:lb4} shows that the minimum degree of $G$ is $\Omega(\Delta^{1 - \epsilon})$, completing the proof of Theorem \ref{thm:lower_bound}. 




\bibliographystyle{plain}
\bibliography{Thesis_reference}

@article{lovaszlocallemma,
 author               = {Erd\H{o}s, P. and Lov{\'a}sz, L.},
 journal              = {Infinite and finite sets},
 pages                = {609--627},
 title                = {Problems and results on 3-chromatic hypergraphs and some related questions},
 volume               = {10},
 year                 = {1975},
 }

@book{mitzenmacher,
 author               = {Mitzenmacher, M. and Upfal, E.},
 publisher            = {Cambridge Univ Pr},
 title                = {Probability and computing: Randomized algorithms and probabilistic analysis},
 year                 = {2005},
 }

@inproceedings{bhyravarapu2022conflict,
  title={Conflict-Free Coloring on Claw-Free Graphs and Interval Graphs},
  author={Bhyravarapu, Sriram and Kalyanasundaram, Subrahmanyam and Mathew, Rogers},
  booktitle={47th International Symposium on Mathematical Foundations of Computer Science (MFCS 2022)},
  year={2022},
  organization={Schloss Dagstuhl-Leibniz-Zentrum f{\"u}r Informatik}
}

@article{even2003conflict,
  title={Conflict-free colorings of simple geometric regions with applications to frequency assignment in cellular networks},
  author={Even, Guy and Lotker, Zvi and Ron, Dana and Smorodinsky, Shakhar},
  journal={SIAM Journal on Computing},
  volume={33},
  number={1},
  pages={94--136},
  year={2003},
  publisher={SIAM}
}

@article{smorodinsky2013conflict,
  title={Conflict-free coloring and its applications},
  author={Smorodinsky, Shakhar},
  journal={Geometry—Intuitive, Discrete, and Convex: A Tribute to L{\'a}szl{\'o} Fejes T{\'o}th},
  pages={331--389},
  year={2013},
  publisher={Springer}
}

@book{cheilaris2009conflict,
  title={Conflict-free coloring},
  author={Cheilaris, Panagiotis},
  year={2009},
  publisher={City University of New York}
}

@inproceedings{Abel17,
  author       = {Zachary Abel and
                  Victor Alvarez and
                  Erik D. Demaine and
                  S{\'{a}}ndor P. Fekete and
                  Aman Gour and
                  Adam Hesterberg and
                  Phillip Keldenich and
                  Christian Scheffer},
  editor       = {Philip N. Klein},
  title        = {Three Colors Suffice: Conflict-Free Coloring of Planar Graphs},
  booktitle    = {Proceedings of the Twenty-Eighth Annual {ACM-SIAM} Symposium on Discrete
                  Algorithms, {SODA} 2017, Barcelona, Spain, Hotel Porta Fira, January
                  16-19},
  pages        = {1951--1963},
  publisher    = {{SIAM}},
  year         = {2017},
  url          = {https://doi.org/10.1137/1.9781611974782.127},
  doi          = {10.1137/1.9781611974782.127},
  timestamp    = {Tue, 02 Feb 2021 17:07:39 +0100},
  biburl       = {https://dblp.org/rec/conf/soda/AbelADFGHKS17.bib},
  bibsource    = {dblp computer science bibliography, https://dblp.org}
}

@inproceedings{ElbassioniM06,
  author       = {Khaled M. Elbassioni and
                  Nabil H. Mustafa},
  editor       = {Bruno Durand and
                  Wolfgang Thomas},
  title        = {Conflict-Free Colorings of Rectangles Ranges},
  booktitle    = {{STACS} 2006, 23rd Annual Symposium on Theoretical Aspects of Computer
                  Science, Marseille, France, February 23-25, 2006, Proceedings},
  series       = {Lecture Notes in Computer Science},
  volume       = {3884},
  pages        = {254--263},
  publisher    = {Springer},
  year         = {2006},
  url          = {https://doi.org/10.1007/11672142\_20},
  doi          = {10.1007/11672142\_20},
  timestamp    = {Wed, 19 May 2021 12:05:43 +0200},
  biburl       = {https://dblp.org/rec/conf/stacs/ElbassioniM06.bib},
  bibsource    = {dblp computer science bibliography, https://dblp.org}
}

@article{LEVTOV20091521,
title = {Conflict-free coloring of unit disks},
journal = {Discrete Applied Mathematics},
volume = {157},
number = {7},
pages = {1521-1532},
year = {2009},
issn = {0166-218X},
doi = {https://doi.org/10.1016/j.dam.2008.09.005},
url = {https://www.sciencedirect.com/science/article/pii/S0166218X08004101},
author = {Nissan Lev-Tov and David Peleg}
}

@article{bhyravarapu_algorithmica,
  author       = {Sriram Bhyravarapu and
                  Subrahmanyam Kalyanasundaram and
                  Rogers Mathew},
  title        = {Conflict-Free Coloring Bounds on Open Neighborhoods},
  journal      = {Algorithmica},
  volume       = {84},
  number       = {8},
  pages        = {2154--2185},
  year         = {2022},
  url          = {https://doi.org/10.1007/s00453-022-00956-6},
  doi          = {10.1007/s00453-022-00956-6},
  timestamp    = {Mon, 08 Aug 2022 21:23:56 +0200},
  biburl       = {https://dblp.org/rec/journals/algorithmica/BhyravarapuKM22.bib},
  bibsource    = {dblp computer science bibliography, https://dblp.org}
}

@article{dkebski2022conflict,
  title={Conflict-free chromatic number versus conflict-free chromatic index},
  author={D{\k{e}}bski, Micha{\l} and Przyby{\l}o, Jakub},
  journal={Journal of Graph Theory},
  volume={99},
  number={3},
  pages={349--358},
  year={2022},
  publisher={Wiley Online Library}
}

@article{bhyravarapu2021short,
  title={A short note on conflict-free coloring on closed neighborhoods of bounded degree graphs},
  author={Bhyravarapu, Sriram and Kalyanasundaram, Subrahmanyam and Mathew, Rogers},
  journal={Journal of Graph Theory},
  volume={97},
  number={4},
  pages={553--556},
  year={2021},
  publisher={Wiley Online Library}
}

@article{glebov2014conflict,
  title={Conflict-free colouring of graphs},
  author={Glebov, Roman and Szab{\'o}, Tibor and Tardos, G{\'a}bor},
  journal={Combinatorics, Probability and Computing},
  volume={23},
  number={3},
  pages={434--448},
  year={2014},
  publisher={Cambridge University Press}
}

@article{pach2009conflict,
  title={Conflict-free colourings of graphs and hypergraphs},
  author={Pach, J{\'a}nos and Tardos, G{\'a}bor},
  journal={Combinatorics, Probability and Computing},
  volume={18},
  number={5},
  pages={819--834},
  year={2009},
  publisher={Cambridge University Press}
}

@inproceedings{krishnan2021pliable,
  title={Pliable index coding via conflict-free colorings of hypergraphs},
  author={Krishnan, Prasad and Mathew, Rogers and Kalyanasundaram, Subrahmanyam},
  booktitle={2021 IEEE International Symposium on Information Theory (ISIT)},
  pages={214--219},
  year={2021},
  organization={IEEE}
}

@article{krishnan2022pliable,
      title={Pliable Index Coding via Conflict-Free Colorings of Hypergraphs}, 
      author={Prasad Krishnan and Rogers Mathew and Subrahmanyam Kalyanasundaram},
      year={2022},
      eprint={2102.02182},
      archivePrefix={arXiv},
      primaryClass={cs.IT},
    journal={https://arxiv.org/abs/2102.02182}
}

@misc{bollobas2001random,
  title={Random Graphs, Cambridge Univ},
  author={Bollob{\'a}s, B},
  year={2001},
  publisher={Press New York, NY, USA}
}
\end{document}